\documentclass[final, 12pt]{article}
\usepackage[english]{babel}
\usepackage[latin1]{inputenc}
\usepackage{hyperref}
\usepackage{amssymb,amsmath,amsthm}
\usepackage{enumerate}
\usepackage[conditional,light,first,bottomafter]{draftcopy}
\draftcopyName{DRAFT\space\today}{130}
\draftcopySetScale{65}
\usepackage[letterpaper, hmargin=3.3cm,vmargin={2.5cm,3cm}]{geometry}
%
\geometry{foot=0.9cm}

\geometry{pdftex}
\usepackage{setspace}
\singlespacing

\makeatletter
\renewcommand{\section}{\@startsection%
{section}%
{1}%
{0em}%
{1.7em}%
{1.2em}%
{\normalfont\large\centering\bfseries}}
\renewcommand{\@seccntformat}[1]%
{\csname the#1\endcsname.\hspace{0.5em}}
\makeatother


\numberwithin{equation}{section}
\usepackage{fancyhdr}
\usepackage[usenames]{color}

\numberwithin{equation}{section}
\newtheorem{theorem}{Theorem}[section]

\newtheorem{lemma}{Lemma}[section]

\theoremstyle{definition}
\newtheorem{definition}{Definition}[section]

\theoremstyle{remark}
\newtheorem{remark}{Remark}[section]



\def\ocirc#1{\ifmmode\setbox0=\hbox{$#1$}\dimen0=\ht0 \advance\dimen0
  by1pt\rlap{\hbox to\wd0{\hss\raise\dimen0
  \hbox{\hskip.2em$\scriptscriptstyle\circ$}\hss}}#1\else {\accent"17 #1}\fi}
\DeclareMathOperator{\re}{Re}
\DeclareMathOperator{\im}{Im}
\DeclareMathOperator{\dom}{dom}

\DeclareMathOperator{\divergence}{div}
\begin{document}
\begin{titlepage}
\title{Spectra of the Gurtin-Pipkin type equations with the kernel, depending on the parameter} 

\footnotetext{ Mathematics Subject Classification(2014): 34D05, 34C23.}  
\footnotetext{Keywords: Volterra integral operators, integrodifferential equations, spectral analysis.}

\author{
\textbf{Romeo Perez Ortiz\footnote{Supported by the Mexican Center for Economic and Social Studies (CEMEES, by its Spanish acronym).}, Victor V. Vlasov\footnote{Supported by the Russian Foundation for Basic Research, project N14-01-00349a and N13-01-00384a}}
\\[6mm]
\small Faculty of Mechanics and Mathematics \\[-1.6mm]
\small Moscow Lomonosov State University \\[-1.6mm]
\small Vorobievi Gori, Moscow, 119991, Russia\\[1mm]
\small\texttt{cemees.romeo@gmail.com}\\[-1mm]
\small\texttt{vlasovvv@mech.math.msu.su}
}
\date{}
\maketitle
\vspace{4mm}
\begin{center}
\begin{minipage}{5in}
  \centerline{{\bf Abstract}} \bigskip In this paper, we study asymptotic behavior of the spectrum of the abstract Gurtin-Pipkin integro-differential equation with the kernel, depending on the parameter. The coefficients of this equation are unbounded and the main part is an abstract hyperbolic equation perturbed by terms that include Volterra integral operators.   
 \end{minipage}
\end{center}
\thispagestyle{empty}
\end{titlepage}
\section{Introduction}\label{introduction}
We study functional differential and integro-differential equations with unbounded operator coefficients in a Hilbert space. The main part 
of the equation under consideration is an abstract hyperbolic-type equation, disturbed by terms involving Volterra operators. These equations can be regarded as an abstract form of the Gurtin-Pipkin equation that describes heat transfer in materials with memory or sound propagation in viscoelastic media. In \cite{GMJ}, for instance, there are countless examples about this equation.   

Let $A$ be a self-adjoint positive operator with domain $\dom (A)\subset H$, where $H$ is a Hilbert space. We introduce a class of second-order abstract models 
\begin{align}
\frac{d^2u}{dt^2}+A^mu+k u- &\int_{0}^{t}K(t-s)A^{2\xi} u(s) ds=f(t), \hspace{0.3cm} t \in \mathbb{R}_{+}, \hspace{0.3cm} m =1, 2 \label{sytem1.1}\\
&u(+0)=\varphi_0, \hspace{0.4cm} u^{(1)}(+0)=\varphi_1 \label{sytem1.2},   
\end{align} where $\varphi_0$ and  $\varphi_1$ will be described later. Here, $\xi$ is a real number in the range $[0,1]$, $k$ is a non-negative constant and $K$ is the kernel associated to system  (\ref{sytem1.1}). 

The system (\ref{sytem1.1})-(\ref{sytem1.2}) with $K \in C^1\cap L_1(\mathbb{R}_+)$ represents an isotropic viscoelastic model if 
$\xi=1/2$, $k=0$, $m=1$   and $Au=-\mu \Delta u-(\lambda+\mu)\triangledown(\divergence u)$ or $Au=-\Delta u$ where $\Delta$ is the Laplacian and $\triangledown$ is the gradient of a tensor field $u$. Likewise, the system  represents a model of ionized atmosphere  
if $\xi=0$, $k >0$, $m=1$  and $A=-\Delta$.  

In \cite{JMF} the authors showed that the solutions for system (\ref{sytem1.1})-(\ref{sytem1.2}) with $m=1$, $\xi \in (0, 1/2)$ and $k=0$  decays polinomially as $t\to +\infty$, even if the kernel $K$ decays exponentially. They also showed that the dissipation given by the memory effect is not strong enough to produce exponential stability of the system (\ref{sytem1.1})-(\ref{sytem1.2})  when $\xi \in [0, 1/2)$, $k=0$ and $m=1$. That is, the corresponding semigroup associated to this problem does not decay exponentially when $\xi \in [0, 1/2)$, but such dissipation is capable to produce polinomial decay under appropriate norms. While in \cite{MFBL}, Fabrizio and Lazzari,  assuming that $\xi=1/2$, $k=0$ and kernel decays exponentially, obtained the exponential decay  of the system (\ref{sytem1.1})-(\ref{sytem1.2}).  

The equation (\ref{sytem1.1}) is called the Gurtin-Pipkin equation.  
When we consider spectral properties of
boundary-value problems for this specific equation we can  establish that properties of
the spectrum essentially depend on the kernel smoothness $K(t)$ at 
$t = 0$.

It is noteworthy that the Gurtin-Pipkin equation appears  in various domains such as mechanics and  physics as heat theory, theory
of viscoelastic media, and kinetic theory of gases. In heat theory and theory of viscoelastic media, the
kernel $K(t)$ is determined through experiments. In \cite{P}, for instance, the properties of heat conduction with memory were
studied. There, a smooth function was considered as a kernel. In \cite{PG}, the properties of solutions
were studied under the assumption that $K(t)$ is smooth, and in \cite{PI}, solutions of control problems with compact support
by boundary control and distributed control were studied. 

In theory of viscoelasticity, a kernel $K(t)$ is determined through experiments as well. The obtained curves
are often approximated by a finite sum of exponents:
\begin{align}
  K(t) \approx \sum_{k=1}^{N} c_k \exp(-\gamma_k t).
\end{align}
The dynamics of one-dimensional viscoelastic medium are described by the following second-order equation with respect t:
\begin{align}\label{equation-pipkin-gurtina}
\rho \frac{d^2u}{dt^2}=k u_{xx}+\beta \frac{du_{xx}}{dt} +\int_{0}^{t}K(t-s)u_{xx}(s) ds.
\end{align} The latter equation, integrating with respect to $t$, can be transformed to an equation that is analogous
to the Gurtin-Pipkin equation with an additional summand $\beta \frac{du_{xx}}{dt}$, which  corresponds to an
instant Kelvin-Voight friction in an original model. If $\beta = 0$ then
the equation (\ref{equation-pipkin-gurtina}) will fully
correspond to the Gurtin-Pipkin equation with the following
convolution kernel:
\begin{align}
  K(t)= \sum_{k=1}^{N} c'_{k} \exp(-\gamma_k t)+c_0.
\end{align} 

The Gurtin-Pipkin equation also appears in the theory of strongly nonhomogeneous media, in particular, in the averaging procedure of a two-phased medium containing two liquids. It is assumed that the mixture has a
periodic structure (model case) and the linear size of a cell of periodicity is equal to $\epsilon$, where $\epsilon$ is a
small parameter. Every cell of periodicity consists of two phases, namely, a liquid of the first type
and a liquid of the second type. Both liquids are incompressible; the velocities have the form $\epsilon^2 \mu_i$ $(i = 1, 2)$, where $\mu_1$ and $\mu_2$ are some constants corresponding to each liquid. The {\em passage to the limit}
as $\epsilon$ tends to $0$ in boundary-value problems for two-phased medium was considered in \cite{ESP} 
(see also \cite{VVZ1}). There, the autors assumed that the linearized equations for a viscous compressible liquid hold for each liquid phase, and the natural conditions of continuity for displacements and tensions hold on the interface.
The  {\em passage to the limit} as $\epsilon$ tends to $0$ for the specified boundary value problem for two-phased media
leads to the equation for maximum sound pressure. This equation has the form
\begin{align}
\frac{dp}{dt}=\int_{0}^{t}\divergence D(t)\triangledown_x p(\overline{x}, s) ds,  \hspace{0.2cm} \overline{x} \in \mathbb{R}^{3},
\end{align}
 where $D(t)$ is the dynamical matrix, whose coefficients $d_{ij} (t)$ are time functions
\begin{align}
  d_{ij} (t)= \sum_{k=1}^{\infty} c_{k}^{ij} \exp(-\gamma_k t)+c^{ij}_0.
\end{align}
If the inclusion of one phase into another within a cell of periodicity has the full symmetry (i.e., it is
symmetric with respect of three mutually perpendicular planes of symmetry in the tree-dimensional
case), then $D(t)$ is a scalar matrix and is defined by a diagonal element $d(t)$, for which the
following representation takes place:
\begin{align}
  d (t)= \sum_{k=1}^{\infty} c_{k} \exp(-\gamma_k t)+c_0.
\end{align}
If we consider only homogeneous motions of an effective medium (all known functions depend
on one space variable), then the equation for sound pressure will have the form of the Gurtin-Pipkin
equation and 
\begin{align}
  K(t)= \sum_{k=1}^{\infty} c_{k} \exp(-\gamma_k t)+c_0,
\end{align} where
\begin{align}\label{condition-suma}
  \sum_{k=1}^{\infty} c_{k}<\infty, \hspace{0.2cm} \sum_{k=1}^{\infty} \gamma_k c_{k}=\infty,
\end{align} which is essential for the analysis of eigen-oscillations of the medium considered.
The condition (\ref{condition-suma}) implies that the value $K(0)$ is finite, but the derivative $K'(t)$ has a singularity at $t = 0$. These conditions can be proved rigorously by methods of averaging theory (see,
e.g., \cite{ESP}). In \cite{ESP}, the terms of the sequence $\{-\gamma_k \}$ are points of the spectrum of a special Stokes-type
problem with periodic conditions, and the terms of the sequence $\{c_k \}$ are coefficients in decomposition
of the discontinuous function $r(x)$  such that, on the phase space, $r(x)=\rho_{1}^{-1}$ correspond  to the first liquid and $r(x)=\rho_{2}^{-1}$  to the second liquid  
with respect to eigen-functions of this problem where $\rho_{1}$ and $\rho_{2}$ are densities of the liquids. Note that the function $r(x)$ is discontinuous on the hypersurface, then the series  $\sum_{k=1}^{\infty} \gamma_k c_{k}$ is
divergent. Otherwise, the function $r(x)$ would belong to the space $H^1=W^{1, 2}$, which contradicts the
presence of discontinuities on the hypersurface. If the microstructure of the mixture of liquids has
continuous density in the space $H^1=W^{1, 2}$ (i.e., if the mixture does not have sharp boundaries between
phases), then the following conditions hold:
\begin{align}
  \sum_{k=1}^{\infty} c_{k}<\infty, \hspace{0.2cm} \sum_{k=1}^{\infty} \gamma_k c_{k}< \infty
\end{align}
This corresponds to the finiteness of $K(0)$ and $K'(0)$. 

In theory of effective models for a two-phased media, the case of a two-phased medium consisting
of an elastic carcass and a weakly viscous liquid is considered. The {\em passage to the limit} with respect to
a small size of a cell of periodicity and a small parameter in a viscosity coefficient leads to a
Biot poroelasticity system introduced and studied in \cite{{MAB}, {GN}, {ESP}}. This system also contains integrodifferential
summands of the convolution type. If we suppose additionally that the rigidity of the elastic carcass
tends to infinity as $\epsilon$  tending to $0$, then at the limit, we get an equation for sound pressure in a liquid that
is similar to an equation for a mixture of two liquids. However, in the representation for elements of
a matrix $D(t)$, there will be no summand $c_{0}^{ij}$ . We can also prove that the following conditions hold:
\begin{align}
  \sum_{k=1}^{\infty} c_{k}<\infty, \hspace{0.2cm} \sum_{k=1}^{\infty} \gamma_k c_{k}=\infty.
\end{align}

Note that the absence of the summand $c_{0}^{ij}$ affects the properties of the spectrum of eigen-oscillations of an effective medium. Continuous oscillations are not possible in this case.

Equations with structure and properties similar to the Gurtin-Pipkin
equation appear in the kinetic theory of gases. In this theory the
equations of a solid medium are derived from laws of pairwise
interaction of molecules. A series of equations for momenta can be derived from the Boltzmann equation by the
Grady method. Momenta are averaging of the distribution function of molecules by coordinates and
velocities with respect to velocity variables with certain weights. In particular, they are ordinary components of the Navier-Stokes equations of velocity, pressure, and density (as functions of spatial variables and time). They can be represented as momenta in a series of momentum equations.

It is known that the spectral properties of the Gurtin-Pipkin equation depend on
properties of the convolution kernel (see \cite{VRShamaev}). Moreover, note that if $K(0)$
and $K'(0)$ are finite, then the equation is the wave equation, but if $K(0)<\infty$
and $K'(0)=\infty$, the solutions and eigen-oscillations have stronger dissipative properties. However, the wave properties are always
preserved. Thus, the considered equation reflects the properties of media where there are oscillations
and dissipations simultaneously and the  scope of these properties is defined by the convolution kernel $K(t)$ at $t= 0$.

In \cite{VR}, Vlasov and Rautian analysed the system  (\ref{sytem1.1})$-$(\ref{sytem1.2}) with $\xi =1$, $k=0$, $m=2$. They established well-defined solvability of initial boundary value problems for the equations (\ref{sytem1.1}) and (\ref{sytem1.2}) in weighted Sobolev space on the positive semi-axis and examined some spectral propiertes of operator-valued functions. Their approach to the proof of the theorem about the well-defined solvability of the initial boundary value problem for the Gurtin-Pipkin equation essentially differs from that used by Pandolfi in \cite{P}. 
Unlike the argument given by Pandolfi,  Vlasov and Rautian considered the existence of solutions in weighted Sobolev space $W_{2, \gamma}^{2}(\mathbb{R}_{+}, A^{2})$ on the semi-axis $\mathbb{R}_{+}$. 

In the proof of existence theorem, Vlasov and Rautian efficiently utilize the Hilbert structure of the spaces $W_{2, \gamma}^{2}(\mathbb{R}_{+},
A^{2})$, $L_{2, \gamma}(\mathbb{R}_{+}, H)$ and the Paley-Wiener theorem (for more details, see \cite{{VR}, {VRShamaev}}).

In contrast to the results obtained in \cite{VR} and \cite{KVW}, our results of the present paper are more general, that  is to say, we
study asymptotic behavior of the spectrum of the abstract
Gurtin-Pipkin integro-differential equation with the kernel, depending on the parameter $\xi$, where $\xi \in (0, 1)$. On the semi-axis $\mathbb{R}_{+}=(0, \infty)$ we consider the problem (\ref{sytem1.1}), (\ref{sytem1.2}) with $\xi \in (0, 1)$, 
$k=0$ and $m=2$ and we proceed to analyze the structure of spectrum of operator $L(\zeta)$, 
{\small \begin{align}
L(\zeta):=\zeta^2 I+A^2-A^{2\xi}\widehat{K}(\zeta)
\end{align}} where
{\small \begin{align}
\widehat{K}(\zeta)=\sum_{k=1}^{\infty}\frac{c_k}{\zeta+\gamma_k}
\end{align}} is the Laplace transform of $K(t)$. 

Here, we consider the dependence of the asymptotic behavior of the complex roots as from the properties of the kernel of the equation
 $\hat{K}$ and parameter $\xi$. For the case  $0 < \xi < 1$ with $a_n\to \infty$, we have that the complex roots tend to the imaginary
axis, whereas in \cite{VR}, for the case $\xi=1$, the complex roots
tend to a line parallel to the imaginary axis. 
 
Throughout the paper, the expression $a \lesssim b$ stands for the inequality $a\leq Cb$ with a positive constant $C$; the expression $a \thickapprox b$ means that $a \lesssim b \lesssim a$.
\section{Statement of the Main Results} \label{statement-main-results} 
\subsection{Correct Solvability} Let $H$ be a separable Hilbert space and $A$ a self-adjoint positive operator in $H$ with a compact inverse. We associate the domain $\dom(A^{\beta})$ of the operator $A^{\beta}$, $\beta >0$, with a Hilbert space $H_{\beta}$ by introducing on $\dom(A^{\beta})$ the  norm $\|\cdot\|=\|A^{\beta}\cdot\|$, equivalent to the graph norm of the operator $A^{\beta}$. Denote by $\{e_j\}_{j=1}^{\infty}$ an othonormal basis formed by eigenvectors of $A$ corresponding to its eigenvalues $a_j$ such that $Ae_j=a_je_j, j \in \mathbb{N}$. The eigenvalues  $a_j$ are enumerated in increasing order with their multiplicity, that is, they satisfy: $0< a_1 \leq a_2 \leq \cdots \leq a_n\cdots$; where $a_n \to \infty$ as $n \to +\infty$.

By  $W_{2, \gamma}^{n}(\mathbb{R}_{+}, A^n)$ we denote the Sobolev space that consists of vector-functions on the semi-axis $\mathbb{R}_{+}=(0, \infty)$ with values in $H$ and norm  
\begin{align*}
\|u\|_{W_{2, \gamma}^{n}(\mathbb{R}_{+}, A^n)} \equiv \left(\int_{0}^{\infty}\exp(-2\gamma t)\left(\|u^{(n)}(t)\|^{2}_{H}+\|A^n u(t)\|^{2}_{H}\right) dt\right)^{1/2}, \hspace{0.3cm}\gamma \geq 0.
\end{align*}
A complete description of the space $W_{2, \gamma}^{n}(\mathbb{R}_{+}, A^n)$ and its properties are given in the monograph \cite[Chap. I]{LM}. Now, on the semi-axis $\mathbb{R}_{+}=(0, \infty)$ consider the problem
\begin{align}
\frac{d^2u}{dt^2}+A^2u- \int_{0}^{t}K(t-s)A^{2\xi} u(s) ds=f(t), \hspace{0.3cm} t \in \mathbb{R}_{+}\label{problem-valor-inicial-1},\\
u(+0)=\varphi_0, \hspace{0.4cm} u^{(1)}(+0)=\varphi_1,\hspace{0.4cm}  0<\xi<1.\label{problem-valor-inicial-2}
\end{align}
It is assumed that the vector-valued function $Af(t)$ belongs to $L_{2, \gamma_0}( \mathbb{R}_{+}, H)$ for some $\gamma_0 \geq 0$, and the scalar function $K(t)$ admits the representation 
\begin{align} 
K(t)=\sum_{j=1}^{\infty}c_j\exp(-\gamma_j t),
\end{align} where $c_j>0$, $\gamma_{j+1}>\gamma_{j}>0$, $j \in \mathbb{N}$, $\gamma_j \to +\infty$ $(j \to +\infty)$ and it is assumed that 
\begin{align}\label{condition-4}
\sum_{j=1}^{\infty}\frac{c_j}{\gamma_j}<1.
\end{align} Note that if the condition (\ref{condition-4}) is satisfied, then $K \in L_1(\mathbb{R}_{+})$ and $\|K\|_{L_1} <1$. Now,  if, moreover,  we take into consideration the condition 
\begin{align}\label{condition-5}
\sum_{j=1}^{\infty} c_j<+\infty,
\end{align} then the kernel $K$ belongs to the space $W_{1}^{1}(\mathbb{R}_{+})$.
\begin{definition} A vector-valued function $u$ is called a {\em strong solution} of problem (\ref{problem-valor-inicial-1}) and (\ref{problem-valor-inicial-2}) if for some $\gamma \geq 0$, $u \in W_{2, \gamma}^{2}(\mathbb{R}_{+}, A^2)$ satisfies the equation (\ref{problem-valor-inicial-1}) almost everywhere on the semi-axis $\mathbb{R}_{+}$, as well as the initial condition (\ref{problem-valor-inicial-2}).
\end{definition} 

In the paper \cite[Theorem 1]{VR}  was shown the existence of strong solution $u$ and the well-defined solvability of the problems  (\ref{problem-valor-inicial-1}) and (\ref{problem-valor-inicial-2}) for $\xi=1$. Here we mention only the results obtained there, since the proofs are similar for the case $0<\xi<1$. 
\begin{theorem} \label{theorem-about-solvability} Suppose for some $\gamma_0 \geq 0$, $A f(t) \in L_{2, \gamma_0}(\mathbb{R}_{+}, H)$. Suppose also that condition (\ref{condition-4}) is satisfied. Then
\begin{enumerate} [\ 1)]
\item If condition (\ref{condition-5}) holds and $\varphi_0 \in H_2$, $\varphi_1 \in H_1$, then for any $\gamma > \gamma_0$ the problems  (\ref{problem-valor-inicial-1}) and (\ref{problem-valor-inicial-2}) have a unique solution in the space $W_{2, \gamma}^{2}(\mathbb{R}_{+}, A^2)$ and this solution satisfies the estimate
\begin{align}
\|u\|_{W_{2, \gamma}^{2}(\mathbb{R}_{+}, A^2)} \leq d\left(\|A f\|^{2}_{L_{2, \gamma}(\mathbb{R}_{+}, H)}+\|A^2\varphi_0\|_{H}+\|A\varphi_1\|_{H}\right).
\end{align}  with a constant $d$ that does not depend on the vector-valued function $f$ and the vectors $\varphi_0$, $\varphi_1$.
\item If condition (\ref{condition-5}) does not hold and $\varphi_0 \in H_3$, $\varphi_1 \in H_2$, then for any $\gamma > \gamma_0$ the problems  (\ref{problem-valor-inicial-1}) and (\ref{problem-valor-inicial-2}) have a unique solution in the space $W_{2, \gamma}^{2}(\mathbb{R}_{+}, A^2)$ and this solution satisfies the estimate
\begin{align}
\|u\|_{W_{2, \gamma}^{2}(\mathbb{R}_{+}, A^2)} \leq d\left(\|A f\|^{2}_{L_{2, \gamma}(\mathbb{R}_{+}, H)}+\|A^3\varphi_0\|_{H}+\|A^2 \varphi_1\|_{H}\right).
\end{align} with a constant $d$ that does not depend on the vector-valued function $f$ and the vectors $\varphi_0$, $\varphi_1$.
\end{enumerate}
\end{theorem}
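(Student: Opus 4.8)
The plan is to pass to the Laplace transform and reduce problem (\ref{problem-valor-inicial-1})--(\ref{problem-valor-inicial-2}) to the invertibility of the pencil $L(\zeta)=\zeta^2 I+A^2-A^{2\xi}\widehat{K}(\zeta)$ on a vertical line $\re\zeta=\gamma$, recovering the solution estimates from the Paley--Wiener characterisation of $W_{2,\gamma}^2(\mathbb{R}_+,A^2)$ and $L_{2,\gamma}(\mathbb{R}_+,H)$. Applying the transform to (\ref{problem-valor-inicial-1}) and using (\ref{problem-valor-inicial-2}) gives
\begin{align*}
L(\zeta)\widehat{u}(\zeta)=\widehat{f}(\zeta)+\zeta\varphi_0+\varphi_1,
\end{align*}
so, wherever $L(\zeta)^{-1}$ exists, $\widehat{u}=L(\zeta)^{-1}(\widehat{f}+\zeta\varphi_0+\varphi_1)$. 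Since $A$ has an orthonormal eigenbasis $\{e_j\}$ with $Ae_j=a_je_j$, the pencil is diagonal in this basis: writing $\widehat{u}=\sum_j\widehat{u}_je_j$ and expanding $\widehat{f},\varphi_0,\varphi_1$ likewise, each coordinate obeys $l_j(\zeta)\widehat{u}_j=\widehat{f}_j+\zeta\varphi_{0j}+\varphi_{1j}$ with the scalar symbol
\begin{align*}
l_j(\zeta)=\zeta^2+a_j^2-a_j^{2\xi}\widehat{K}(\zeta).
\end{align*}
The theorem thus reduces to uniform-in-$j$ multiplier bounds for $1/l_j$ along $\zeta=\gamma+iy$, $\gamma>\gamma_0$.

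First I would record the consequences of (\ref{condition-4}). For $\re\zeta\ge0$ one has $|\zeta+\gamma_k|\ge\gamma_k$, hence $|\widehat{K}(\zeta)|\le\sum_k c_k/\gamma_k<1$; moreover $\re\widehat{K}(\zeta)=\sum_k c_k(\gamma+\gamma_k)/|\zeta+\gamma_k|^2>0$, and $\im\widehat{K}(\gamma+iy)$ has the sign of $-y$. The central step is a lower bound for $|l_j(\gamma+iy)|$, obtained by splitting the line into two regimes. For $y^2\gtrsim a_j^2+\gamma^2$ the term $-y^2$ dominates $\re l_j=\gamma^2-y^2+a_j^2-a_j^{2\xi}\re\widehat{K}$, because $a_j^{2\xi}\le a_j^2$ for $\xi<1$, giving $|l_j|\gtrsim a_j^2+|\zeta|^2$. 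In the complementary bounded regime either $\re l_j$ is not small, so that $|l_j|\gtrsim a_j^2$, or $\re l_j\approx0$, which forces $y^2\approx a_j^2$; in the latter case the decisive observation is that
\begin{align*}
\im l_j=2\gamma y-a_j^{2\xi}\im\widehat{K}=y\Big(2\gamma+a_j^{2\xi}\sum_k\tfrac{c_k}{(\gamma+\gamma_k)^2+y^2}\Big)
\end{align*}
is a product of two factors of the same sign, whence $|l_j|\ge|\im l_j|\ge2\gamma|y|\gtrsim a_j$, with a constant depending on $\gamma$. This weaker bound, valid only near the zeros of $\re l_j$, is the price paid near the spectrum, and it is exactly there that the positivity enforced by (\ref{condition-4}) is indispensable.

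With the multiplier bounds in hand I would invoke Paley--Wiener: for $v\in L_{2,\gamma}(\mathbb{R}_+,H)$ one has $\|v\|_{L_{2,\gamma}}^2\approx\int_{\mathbb{R}}\|\widehat{v}(\gamma+iy)\|_H^2\,dy$, while $\widehat{u''}=\zeta^2\widehat{u}-\zeta\varphi_0-\varphi_1$. Substituting the equation gives the convenient identity $\zeta^2\widehat{u}_j-\zeta\varphi_{0j}-\varphi_{1j}=\widehat{f}_j-(a_j^2-a_j^{2\xi}\widehat{K})\widehat{u}_j$, so controlling $\|A^2u\|_{L_{2,\gamma}}$ and $\|u''\|_{L_{2,\gamma}}$ reduces to bounding $\int\sum_j a_j^4|\widehat{u}_j|^2\,dy$ and $\int\sum_j|\zeta|^4|\widehat{u}_j|^2\,dy$ by the data. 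Splitting $\widehat{u}_j$ into a forcing part $\widehat{f}_j/l_j$ and a data part $(\zeta\varphi_{0j}+\varphi_{1j})/l_j$, the combined bound $(a_j^4+|\zeta|^4)/|l_j|^2\lesssim a_j^2$ (a direct consequence of the regime analysis) turns the first into $\int\sum_j a_j^2|\widehat{f}_j|^2\,dy=\|Af\|_{L_{2,\gamma}}^2$. For the data part the relevant scalar integrals are $\int|\zeta|^2/|l_j|^2\,dy$ and $\int1/|l_j|^2\,dy$; near the critical point $y\approx Y_j$ (the zero of $\re l_j$, with $Y_j\approx a_j$) the quantity $|l_j|^2\approx(Y_j^2-y^2)^2+(\im l_j)^2$ has a Lorentzian profile, so the contribution is governed by $\int du/(a_j^2u^2+\gamma^2a_j^2)\approx(\gamma a_j^2)^{-1}$ rather than by the crude pointwise bound, and one obtains $\int|\zeta|^2/|l_j|^2\,dy\lesssim1$ and $\int1/|l_j|^2\,dy\lesssim a_j^{-2}$. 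These close the data terms against $\|A^2\varphi_0\|^2$ and $\|A\varphi_1\|^2$, i.e. against $\varphi_0\in H_2$, $\varphi_1\in H_1$, and the same bounds show that the $\widehat{u}$ defined by the formula meets the Paley--Wiener conditions, so its inverse transform is the sought solution in $W_{2,\gamma}^2(\mathbb{R}_+,A^2)$; uniqueness is immediate since $L(\zeta)$ is boundedly invertible on the line. The second case is identical except for the high-frequency behaviour of $\widehat{K}$: when (\ref{condition-5}) fails one has $K(0^+)=\sum_k c_k=+\infty$, so $\zeta\widehat{K}(\zeta)$ has no finite limit and $\widehat{K}$ decays slower than $1/|\zeta|$; the data integrals then converge only after one extra power of $A$ is spent, which is precisely the requirement $\varphi_0\in H_3$, $\varphi_1\in H_2$.

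The main obstacle is the transitional regime $y^2\approx a_j^2$, where $|l_j|$ degenerates from the full elliptic weight $a_j^2+|\zeta|^2$ down to order $\gamma a_j$. There the naive pointwise estimate is too lossy, and one must exploit the Lorentzian profile of $|l_j|^2$ to show that each $y$-integral above is nonetheless finite and carries the correct power of $a_j$, uniformly in $j$; verifying that the slower decay of $\widehat{K}$ when (\ref{condition-5}) fails is compensated exactly by one additional power of $A$ on the initial data is the second delicate point. Everything else --- the diagonalisation, the elementary bounds on $\widehat{K}$, and the passage through Paley--Wiener --- is routine once these estimates are secured.
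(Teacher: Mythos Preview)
The paper does not actually prove this theorem: it states that the case $\xi=1$ is Theorem~1 of \cite{VR} and simply asserts that ``the proofs are similar for the case $0<\xi<1$'', noting only that Vlasov and Rautian ``efficiently utilize the Hilbert structure of the spaces $W_{2,\gamma}^{2}(\mathbb{R}_+,A^2)$, $L_{2,\gamma}(\mathbb{R}_+,H)$ and the Paley--Wiener theorem''. Your sketch is precisely that Vlasov--Rautian scheme --- Laplace transform, diagonalisation in the eigenbasis of $A$, uniform lower bounds on the scalar symbols $l_j(\gamma+iy)$, and Paley--Wiener to return to the time side --- so you are filling in exactly what the paper delegates to \cite{VR}. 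The regime analysis for $|l_j|$ and the Lorentzian integral near $y\approx a_j$ are the correct ingredients, and the trick of recovering $\widehat{u''}$ from the equation rather than from $\zeta^2\widehat u$ directly is what keeps the $|\zeta|$-integrals finite.

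One point to tighten: your explanation of why case~2) requires one extra power of $A$ on the initial data is only heuristic. In the Vlasov--Rautian argument the distinction does not arise from the multiplier bounds on $1/l_j$ (those use only \eqref{condition-4}), but from the reduction to homogeneous initial data: one subtracts an auxiliary function carrying $\varphi_0,\varphi_1$ and must check that the resulting right-hand side still satisfies $Af\in L_{2,\gamma}$. The convolution term applied to this auxiliary function brings in $K'$, and when \eqref{condition-5} fails $K'(0^+)$ is not finite, which forces the stronger hypotheses $\varphi_0\in H_3$, $\varphi_1\in H_2$. You should make that mechanism explicit rather than attributing the loss to the decay rate of $\widehat K$.
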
 

\subsection{Spectral Analysis}\label{spectral-analysis} 
We now turn to the concepts of spectral analysis, which represent the basis of interest of this work. Consider the operator-valued function 
\begin{align}
L(\zeta)= \zeta^2I+A^2-A^{2\xi}\widehat{K}(\zeta), \hspace{0.4cm} 0 < \xi < 1,
\end{align} where
\begin{align*}
\widehat{K}(\zeta)=\sum_{k=1}^{\infty}\frac{c_k}{\zeta+\gamma_k}
\end{align*} is the Laplace transform of $K(t)$, and $I$ is the identity operator in $H$. The eigenvalues of the operator $A$ satisfy the inequalities $0< a_1<  a_2 < \cdots < a_n\cdots$; where $a_n \to \infty$ as $n \to +\infty$.

Let us now turn to the question about the structure of the spectrum of the operator-valued function $L(\zeta)$ in the case when the following condition holds: 
\begin{align} \label{suprem-de-las-gammas}
\sup_{k}\{\gamma_k(\gamma_{k+1}-\gamma_k)\}=+\infty
\end{align}

\begin{remark} The condition (\ref{suprem-de-las-gammas}) was used by Ivanov and Sheronova in \cite{ISH} to study the zeroes of the meromorphic function $\ell_n(\zeta)/\zeta$ in the case $a_n=n$ $ (n \in \mathbb{N})$. The same condition is used in the following theorem. 
 \end{remark}
\begin{theorem}\label{teorema-de-ceros-numerables} Suppose that the conditions (\ref{condition-5}) and (\ref{suprem-de-las-gammas}) are satisfied. Then the zeros of the meromorphic function  
\begin{equation}
    \ell_{n}(\zeta):= {\zeta}^2+ a^{2}_{n} \left(1- \frac{1}{a^{2(1-\xi)}_{n}}\sum_{k=1}^{\infty} \frac{c_k}{\zeta+\gamma_k}\right), \hspace{0.2cm}  0<\xi <1,
  \end{equation}
form a countable set of real zeroes  $\{\mu_{n, k}\}^{\infty}_{k=1}$, that satisfy the inequalities
\begin{align}\label{inequality-sum-finit}
   -\gamma_k<\mu_{n, k} <x_{n,k} < -\gamma_{k-1}, 
  \end{align}
\begin{align}
 \mu_{n, k}&=x_{n,k} + O\left( \frac{1}{a^{2(1-\xi)}_{n}}\right), \hspace{0.2cm}k \in \mathbb{N},\hspace{0.2cm} a_{n} \to \infty,\\
&\lim_{n\to\infty} \mu_{n, k}=\lim_{n\to\infty} x_{n,k}=-\gamma_k, 
  \end{align} where $x_{n,k}$ are real zeroes of the function
\begin{equation}
    f(\zeta):= 1- \frac{1}{a^{2(1-\xi)}_{n}}\sum_{k=1}^{\infty} \frac{c_k}{\zeta+\gamma_k}, \hspace{0.2cm}  0<\xi <1,
  \end{equation} 
together with a pair of complex-conjugate zeroes $\mu^{\pm}_{n}$,  $\mu^{+}_{n}= \overline{\mu^{-}_{n}}$ which admit the asymptotic representations when $a_{n} \to \infty$
{\small
\begin{align} 
&\mu^{\pm}_{n}(\xi)=-\frac{1}{2} \frac{1}{a_{n}^{2(1-\xi)}}\sum_{j=1}^{\infty}c_j+O\left(\frac{1}{a^{2(1-\xi)}_{n}}\right)\pm i\left(a_{n}+O\left(\frac{1}{a^{1-2\xi}_{n}}\right)\right),  \hspace{0.2cm}&\mbox{$0<\xi<\frac{1}{2}$}, \label{asymptota-sum-finita-para-mu1} \\
&\mu^{\pm}_{n}(\xi)=-\frac{1}{2} \frac{1}{a_{n}^{2(1-\xi)}}\sum_{j=1}^{\infty}c_j+O\left(\frac{1}{a^{2(1-\xi)}_{n}}\right)\pm i\left(a_{n}+O\left(a^{1-2\xi}_{n}\right)\right),  \hspace{0.2cm}&\mbox{$\frac{1}{2}<\xi<1$},\label{asymptota-sum-finita-para-mu2}\\
&\mu^{\pm}_{n}(\xi)=-\frac{1}{2} \frac{1}{a_{n}}\sum_{j=1}^{\infty}c_j+O\left(\frac{1}{a_{n}}\right)\pm i\left(a_{n}+O\left(1\right)\right),  \hspace{0.2cm}&\mbox{$\xi=\frac{1}{2}$}, \label{asymptota-sum-finita-para-mu3}
\end{align}} 
\end{theorem}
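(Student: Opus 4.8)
The plan is to exploit the factorization $\ell_n(\zeta)=\zeta^2+a_n^2 f(\zeta)$, in which $f(\zeta)=1-a_n^{-2(1-\xi)}\widehat{K}(\zeta)$ is exactly the function whose real zeros are the $x_{n,k}$, and to treat the real zeros and the complex pair by separate arguments, reconciling the two through a global counting step at the end. For the real zeros I would first record that $\widehat{K}(\zeta)=\sum_k c_k/(\zeta+\gamma_k)$ has simple poles at the $-\gamma_k$ and, on the real axis, satisfies $\widehat{K}'(\zeta)=-\sum_k c_k/(\zeta+\gamma_k)^2<0$, so it is strictly decreasing on each gap $(-\gamma_k,-\gamma_{k-1})$ (with $\gamma_0:=0$), running from $+\infty$ as $\zeta\to-\gamma_k^{+}$ to $-\infty$ as $\zeta\to-\gamma_{k-1}^{-}$. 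Hence $f$ is strictly increasing from $-\infty$ to $+\infty$ there and has a unique zero $x_{n,k}$. Using the factorization, $\ell_n(\zeta)\to-\infty$ as $\zeta\to-\gamma_k^{+}$ while $\ell_n(x_{n,k})=x_{n,k}^{2}>0$, so the intermediate value theorem yields a zero $\mu_{n,k}\in(-\gamma_k,x_{n,k})$, giving the chain $-\gamma_k<\mu_{n,k}<x_{n,k}<-\gamma_{k-1}$. On the complementary piece $[x_{n,k},-\gamma_{k-1})$ one has $f>0$, hence $\ell_n=\zeta^2+a_n^2 f>0$ and no further zero; the same positivity, together with $\ell_n(0)=a_n^2 f(0)>0$ (from (\ref{condition-4})), rules out nonnegative real zeros.

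The asymptotics $\mu_{n,k}=x_{n,k}+O(a_n^{-2(1-\xi)})$ and $\mu_{n,k},x_{n,k}\to-\gamma_k$ I would get by localizing near the pole. Since $\widehat{K}(x_{n,k})=a_n^{2(1-\xi)}$ and $\widehat{K}(\mu_{n,k})=a_n^{2(1-\xi)}+\mu_{n,k}^{2}/a_n^{2\xi}$ both tend to $+\infty$ as $a_n\to\infty$, and $+\infty$ is attained inside the gap only as $\zeta\to-\gamma_k^{+}$, both zeros converge to $-\gamma_k$. Writing $\zeta=-\gamma_k+\delta$ and using $\widehat{K}(\zeta)=c_k/\delta+R_k(\zeta)$ with $R_k$ analytic and bounded near $-\gamma_k$, I would solve the two scalar equations for the respective $\delta$'s and subtract; the discrepancy is driven by the term $\mu_{n,k}^{2}/a_n^{2\xi}$ that enters only the equation for $\mu_{n,k}$, which produces the stated $O(a_n^{-2(1-\xi)})$ bound.

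For the complex pair I would perturb off the zeros $\pm i a_n$ of the unperturbed symbol $\zeta^2+a_n^2$. Setting $\zeta=i a_n+w$ and using $\widehat{K}(\zeta)\approx(\sum_j c_j)/\zeta$ for large $|\zeta|$, the equation $2ia_n w+w^2=a_n^{2\xi}\widehat{K}(i a_n+w)$ gives to leading order $w=-\tfrac12 a_n^{2\xi-2}\sum_j c_j$ plus lower-order terms, so $\re\mu_n^{\pm}=-\tfrac12 a_n^{-2(1-\xi)}\sum_j c_j$ and $\im\mu_n^{\pm}=\pm(a_n+o(a_n))$. Separating the equation into real and imaginary parts and carrying the expansion one order further produces the three regimes $\xi<\tfrac12$, $\xi=\tfrac12$, $\xi>\tfrac12$, according to how $a_n^{2\xi}\re\widehat{K}(ia_n)$ compares with $a_n^{2\xi-1}$. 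The delicate point is controlling $\widehat{K}(ia_n)=\sum_j c_j/(ia_n+\gamma_j)$ uniformly when only $\sum_j c_j<\infty$ is assumed, so that $\sum_j\gamma_j c_j$ may diverge; the elementary bound $\gamma_j/(\gamma_j^2+a_n^2)\le 1/(2a_n)$ together with dominated convergence keeps $\re\widehat{K}(ia_n)$ of order $o(a_n^{-1})$ and furnishes the error estimates in each range of $\xi$.

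Finally, to show that these are all the zeros — exactly one $\mu_{n,k}$ per gap together with the single complex-conjugate pair — I would close with a counting argument. Truncating $\widehat{K}$ to its first $N$ terms turns $\ell_n$, after multiplication by $\prod_{j\le N}(\zeta+\gamma_j)$, into a polynomial of degree $N+2$; the $N$ real zeros already located in the $N$ gaps plus the complex pair exhaust $N+2$ zeros, forcing uniqueness in each gap and forbidding any other zero. Passing to the limit $N\to\infty$ (equivalently, applying the argument principle directly to the meromorphic $\ell_n$ on a large circle, where $\ell_n(\zeta)/\zeta^2\to 1$, so that zeros minus poles equals $2$) transfers the conclusion to the full kernel. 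I expect this completeness step, and the bookkeeping of the error terms for $\mu_n^{\pm}$ across the three ranges of $\xi$, to be the main technical obstacles; the condition (\ref{suprem-de-las-gammas}) is what I would invoke, following \cite{ISH}, to control the distribution of zeros and guarantee that the complex pair stays separated from the real zeros so that the count is exactly $N+2$.
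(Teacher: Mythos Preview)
Your outline is sound and, in several places, more transparent than the paper's own argument. The paper does \emph{not} handle the real zeros by monotonicity of $\widehat{K}$ and the intermediate value theorem, nor does it obtain the asymptotics of $\mu_n^{\pm}$ by a direct perturbation $\zeta=ia_n+w$. Instead it first treats the truncated symbol $\ell_{n,N}$ (finite sum with $N$ terms) and extracts the asymptotics of the complex pair purely from Vieta's formulas for the degree-$(N{+}2)$ polynomial $\prod_{k\le N}(\zeta+\gamma_k)\,\ell_{n,N}(\zeta)$: the sum and product of the roots yield $\re\mu_n^{\pm}$ and $|\mu_n^{\pm}|^2$ directly. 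Passage from $\ell_{n,N}$ to $\ell_n$ is then carried out by Rouch\'e on small circles $D_\epsilon(\mu_n^{+})$ around the complex zeros of $\ell_{n,N}$, using the tail bound $\sum_{k>N}c_k<\epsilon/10$. Your perturbative computation gives the same leading terms with less algebraic machinery; the Vieta route has the compensating advantage that no estimate on $\widehat{K}'$ or on $\re\widehat{K}(ia_n)$ is needed.

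The one place where your sketch needs genuine repair is the global count. Your ``large circle where $\ell_n(\zeta)/\zeta^2\to1$'' does not work as written: the poles $-\gamma_k$ accumulate along the negative real axis, so on any circle $|\zeta|=R$ the ratio $\ell_n/\zeta^2$ is not close to $1$ near $\zeta=-R$, and Rouch\'e fails there. This is precisely where condition~(\ref{suprem-de-las-gammas}) enters, and not in the way you suggest (it is not about separating the complex pair from the real zeros). The paper replaces the circle by a rectangle $\Gamma$ whose left side sits at $\re\zeta=-X_N$ with $X_N=(\gamma_N+\gamma_{N+1})/2$, and proves $|f(\zeta)|<|g(\zeta)|=|\zeta|^2/a_n^2$ on every side of $\Gamma$. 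The delicate side is $\Gamma^{-}$; there one needs $\inf_N q(X_N)/X_N^2=0$ for a suitable majorant $q(X_N)\lesssim 1+(\gamma_{N+1}-\gamma_N)^{-1}$, and this is exactly what $\sup_k\gamma_k(\gamma_{k+1}-\gamma_k)=+\infty$ supplies. Rouch\'e on $\Gamma$ then gives $N(f+g)-P(f+g)=2$, i.e.\ $N{+}2$ zeros against $N$ poles inside, matching the $N$ real zeros in the gaps plus the complex-conjugate pair. Your truncation-then-limit alternative would still need an argument ruling out zeros drifting in from infinity or extra zeros in gaps, which the rectangular-contour step handles in one stroke.
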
 
The following theorem provides analysis about the asymptotic behavior of complex zeros of the function $\ell_{n}$, when the condition  $\sum_{k=1}^{\infty}c_k<+\infty$ is not satisfied, and the sequences $\{c_{k}\}^{\infty}_{k=1}$, $\{\gamma_{k}\}^{\infty}_{k=1}$ have the following asymptotic representation 
{\small
\begin{align}\label{asimptotas-de-c-y-gamma} 
   c_k=\frac{A}{k^{\alpha}} + O\left( \frac{1}{k^{\alpha +1}}\right), \hspace{0.2cm}  \gamma_k=B k^{\beta} + O( k^{\beta-1}), \hspace{0.2cm}  k \in \mathbb{N}
\end{align}} as $k \to \infty$, where  $c_k>0$, $\gamma_{k+1}>\gamma_{k}>0$, constants $A>0, B>0$,  $0<\alpha \leq 1$, $\alpha +\beta>1$ such that
 \begin{align}
   \sum_{k=1}^{\infty}\frac{c_k}{\gamma_k}<1.
\end{align} Note that from (\ref{asimptotas-de-c-y-gamma}),  
 \begin{align}\label{gama-k+1-menos-gama-k-1} 
 \gamma_{k+1}-\gamma_{k}=O(k^{\beta-1}),  \hspace{0.2cm}  k \to \infty.
\end{align}Thus for some $p \in \mathbb{N} $ such that $p \geq 1$ we have 
\begin{align}\label{gama-k+1-menos-gama-k-2}
\gamma_{k+1}-\gamma_{k} \thickapprox k^{\beta-p}.
\end{align} Moreover, if (\ref{gama-k+1-menos-gama-k-2}) is satisfied  as $k \to \infty$  with $p \geq 1$, then the condition  (\ref{suprem-de-las-gammas}) of Ivanov is satisfied when $p <2\beta$. 
\begin{remark} In the case $c_k=\frac{A}{k^{\alpha}}$, $\gamma_k=Bk^{\alpha}$, $a_n=n$, the asymptotic formulas for the complex zeroes $\lambda_{n}^{\pm}$ were provided by S. A. Ivanov in \cite{SAI}. 
\end{remark}

\begin{theorem} \label{teorema-de-ceros-infinitos} Assume that the conditions (\ref{asimptotas-de-c-y-gamma}) $-$ (\ref{gama-k+1-menos-gama-k-2}) are satisfied. Then the zeroes of the function $\ell_{n}$ form a countable set of real zeroes $\{\lambda_{n, k}|k \in \mathbb{N}\}$  such that
{\small \begin{equation} \label{desigualdad-ceros-infinitos}
  \cdots -\gamma_k<\lambda_{n, k} <x_{n, k} < -\gamma_{k-1}<\lambda_{n, k-1} <x_{n, k-1}<\cdots < -\gamma_{1}<\lambda_{n, 1} <x_{n, 1}<0 
  \end{equation}} together with a pair of complex-conjugate zeroes $\lambda^{\pm}_{n}$,  $\lambda^{+}_{n}= \overline{\lambda^{-}_{n}}$ which admit the asymptotic representations 
 \begin{align}
  &\lambda^{\pm}_{n}(\xi)=\pm ia_n-\frac{DA}{\beta B^{1-r}}\frac{1}{a^{1+r-2\xi}_{n}}+O\left( \frac{1}{a^{2(r-\xi)+1}_{n}}\right),\hspace{0.2cm} &\mbox{$0<r<\frac{1}{2}$}, \label{aimptotas-de-ceros-infinitos-1} \\
  &\lambda^{\pm}_{n}(\xi)=\pm ia_n-\frac{DA}{\beta B^{1-r}}\frac{1}{a^{1+r-2\xi}_{n}}+O\left( \frac{1}{a^{2(1-\xi)}_{n}}\right), \hspace{0.2cm}&\mbox{$\frac{1}{2}\leq r<1$}, \label{aimptotas-de-ceros-infinitos-2} \\
  &\lambda^{\pm}_{n}(\xi)=\pm ia_n-\frac{1}{2}\frac{A}{\beta}\frac{1}{a^{2(1-\xi)}_{n}}\ln a_n+ O\left( \frac{1}{a^{2(1-\xi)}_{n}}\right), \hspace{0.2cm}&\mbox{$r=1$}, \label{aimptotas-de-ceros-infinitos-3} 
  \end{align} as $a_n \to \infty$, where $r:=\frac{\alpha+\beta-1}{\beta}$, the constant $D$ depends of $r$ and  is defined as follows: 
 {\small
 \begin{align*} 
D:=\frac{i}{2}\int_{0}^{\infty}\frac{dt}{t^r(i+t)}=\frac{1}{2}\left(\int_{0}^{\infty}\frac{dt}{t^{r}(1+t^{2})}+i\int_{0}^{\infty}\frac{dt}{t^{r-1}(1+t^{2})}\right)=\frac{\pi}{2}\frac{\exp{\left(i\frac{\pi}{2}(1-r)\right)}}{\sin(\pi r)}. 
  \end{align*}}
  \end{theorem}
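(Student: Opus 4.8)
The plan is to treat $\ell_{n}$ as a meromorphic function with simple poles at the points $-\gamma_{k}$, and to split its zero set into a pole-interlacing family of real zeros and a single non-real conjugate pair produced by the dominant balance $\zeta^{2}+a_{n}^{2}\approx 0$. Write $g(\zeta):=\sum_{k=1}^{\infty}c_{k}/(\zeta+\gamma_{k})$, so that $\ell_{n}(\zeta)=\zeta^{2}+a_{n}^{2}f(\zeta)$ with $f(\zeta)=1-a_{n}^{-2(1-\xi)}g(\zeta)$. First I would prove (\ref{desigualdad-ceros-infinitos}). On each pole-free interval $(-\gamma_{k},-\gamma_{k-1})$ of the real axis the function $g$ is real and strictly decreasing, since $g'(\zeta)=-\sum_{k}c_{k}/(\zeta+\gamma_{k})^{2}<0$, and it runs from $+\infty$ at $-\gamma_{k}^{+}$ to $-\infty$ at $-\gamma_{k-1}^{-}$; hence $f$ increases strictly from $-\infty$ to $+\infty$ and has a unique zero $x_{n,k}$. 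Since $\ell_{n}=\zeta^{2}+a_{n}^{2}f$ and $\zeta^{2}>0$, we get $\ell_{n}>0$ on $[x_{n,k},-\gamma_{k-1})$, whereas $\ell_{n}(-\gamma_{k}^{+})=-\infty$ and $\ell_{n}(x_{n,k})=x_{n,k}^{2}>0$; the intermediate value theorem produces a zero $\lambda_{n,k}\in(-\gamma_{k},x_{n,k})$, giving the chain (\ref{desigualdad-ceros-infinitos}). Uniqueness of $\lambda_{n,k}$ in each interval, and the fact that these real zeros together with the pair below exhaust the zero set, I would settle by an argument-principle count on expanding contours.

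For the non-real zeros I would localize them near $\pm i a_{n}$, the zeros of the unperturbed symbol $\zeta^{2}+a_{n}^{2}$. A Rouch\'e comparison of $\ell_{n}$ with $\zeta^{2}+a_{n}^{2}$ on a circle of radius $o(a_{n})$ about $\pm i a_{n}$, on which $|a_{n}^{2\xi}g(\zeta)|\thickapprox a_{n}^{2\xi-r}$ is dominated by $|\zeta^{2}+a_{n}^{2}|\thickapprox a_{n}\rho$, shows that $\ell_{n}$ has exactly one zero in each disc; since $\ell_{n}$ has real coefficients they form a conjugate pair $\lambda_{n}^{\pm}$. Writing $\lambda_{n}^{\pm}=\pm i a_{n}+\delta_{n}$ and inserting into $\ell_{n}=0$ yields the scalar equation $\pm 2 i a_{n}\delta_{n}+\delta_{n}^{2}=a_{n}^{2\xi}g(\pm i a_{n}+\delta_{n})$, whence to leading order $\delta_{n}\approx\mp\tfrac{i}{2}a_{n}^{2\xi-1}g(\pm i a_{n})$.

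The heart of the proof is the asymptotics of $g(\pm i a_{n})$ as $a_{n}\to\infty$. Using $c_{k}=Ak^{-\alpha}+O(k^{-\alpha-1})$ and $\gamma_{k}=Bk^{\beta}+O(k^{\beta-1})$ I would replace the series by the integral $\int_{0}^{\infty}Ax^{-\alpha}/(i a_{n}+Bx^{\beta})\,dx$, controlling the difference by an Euler--Maclaurin estimate, and substitute $t=Bx^{\beta}/a_{n}$; the powers of $a_{n}$ and $B$ then collapse to
\begin{align*}
g(i a_{n})=\frac{A}{\beta B^{1-r}}\,a_{n}^{-r}\int_{0}^{\infty}\frac{dt}{t^{r}(i+t)}+O(a_{n}^{-1})=-\frac{2iDA}{\beta B^{1-r}}\,a_{n}^{-r}+O(a_{n}^{-1}),
\end{align*}
with $r=(\alpha+\beta-1)/\beta$ and $D$ exactly the constant of the statement. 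Feeding this into $\delta_{n}\approx\mp\tfrac{i}{2}a_{n}^{2\xi-1}g(\pm i a_{n})$ reproduces the main term $-DA\,\beta^{-1}B^{-(1-r)}a_{n}^{-(1+r-2\xi)}$ of (\ref{aimptotas-de-ceros-infinitos-1})--(\ref{aimptotas-de-ceros-infinitos-2}). When $r=1$ the integral diverges logarithmically at $t=0$; truncating the series at the crossover $k\thickapprox(a_{n}/B)^{1/\beta}$ instead gives $g(i a_{n})\thickapprox A(i\beta a_{n})^{-1}\ln a_{n}$ and hence the $\ln a_{n}$ term of (\ref{aimptotas-de-ceros-infinitos-3}).

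I expect the error bookkeeping separating the three regimes to be the main obstacle. Two error sources compete in passing from the series to $\delta_{n}$: the subleading term of the sum-to-integral expansion, of relative size $a_{n}^{-r}$, contributing $O(a_{n}^{-(1+2r-2\xi)})=O(a_{n}^{-(2(r-\xi)+1)})$ to $\delta_{n}$, and the discretization error of the small-$k$ part $\sum_{k\lesssim(a_{n}/B)^{1/\beta}}c_{k}/(i a_{n})$, of size $O(a_{n}^{-1})$, contributing $O(a_{n}^{-2(1-\xi)})$. The two remainders coincide exactly at $r=\tfrac12$, so the first dominates for $0<r<\tfrac12$, giving (\ref{aimptotas-de-ceros-infinitos-1}), and the second for $\tfrac12\le r<1$, giving (\ref{aimptotas-de-ceros-infinitos-2}), while the resonance at $r=1$ forces the logarithm. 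The delicate points are to make the Euler--Maclaurin estimates uniform in $a_{n}$ for $0<\alpha\le1$ and to verify a posteriori that $\delta_{n}\to0$, so that the replacement $g(\pm i a_{n}+\delta_{n})=g(\pm i a_{n})+O(\delta_{n}g'(\pm i a_{n}))$ is legitimate; I would close this by a contraction argument on the scalar equation.
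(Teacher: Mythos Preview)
Your proposal is correct and follows essentially the same strategy as the paper: both localize the non-real zeros near $\pm ia_{n}$, recast $\ell_{n}=0$ as a fixed-point equation for the correction (the paper writes $\lambda_{n}^{+}=ia_{n}+\tau_{n}a_{n}$ and shows $\tau\mapsto g_{n}(\tau)$ is a contraction via the decay of $\hat K$ and $\zeta\hat K'$; you do the same with $\delta_{n}$), replace $\hat K$ by the integral $\int_{1}^{\infty}At^{-\alpha}(\zeta+Bt^{\beta})^{-1}\,dt$ (packaged in the paper as a lemma quoted from \cite{VR}, in your version as an Euler--Maclaurin step), and read off the identical error dichotomy at $r=\tfrac12$ and the logarithm at $r=1$.

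The one methodological difference worth noting is the counting argument. The paper establishes the interlacing (\ref{desigualdad-ceros-infinitos}) and the fact that there are exactly two non-real zeros simultaneously, by proving $|f(\zeta)|<|g(\zeta)|$ on all four sides of an explicit expanding rectangle $\Gamma$ (with $X_{N}=(\gamma_{N}+\gamma_{N+1})/2$ and carefully chosen $Y$) and then invoking Rouch\'e/argument principle; this is where conditions (\ref{gama-k+1-menos-gama-k-1})--(\ref{gama-k+1-menos-gama-k-2}) and the Ivanov condition (\ref{suprem-de-las-gammas}) enter. You instead obtain the interlacing by the direct monotonicity/IVT argument, which is cleaner, and defer the global count (uniqueness in each interval, exactly one conjugate pair) to a local Rouch\'e near $\pm ia_{n}$ plus an argument-principle step on expanding contours that you only sketch. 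Both routes work; the paper's rectangle handles existence, uniqueness, and the two-complex-zeros claim in one stroke, while yours separates the local picture from the global accounting.
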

  
\section{Proof of Theorems \ref{teorema-de-ceros-numerables} and \ref{teorema-de-ceros-infinitos}}\label{proof-of-the-theorems} 
Before proving Theorem \ref{teorema-de-ceros-numerables}, first we shall prove the following statement. 

\begin{lemma}\label{lemma-de-ceros-finitos} Consider the function 
\begin{equation}
    \ell_{n,N}(\zeta):= {\zeta}^2+ a^{2}_{n} \left(1- \frac{1}{a^{2(1-\xi)}_{n}}\sum_{k=1}^{N} \frac{c_k}{\zeta+\gamma_k}\right), \hspace{0.2cm}  0<\xi <1,
  \end{equation}
The zeroes of the function $\ell_{n,N}$ form a set of real zeroes  $\{\mu_{n, k}\}^{N}_{k=1}$, such that
{\small \begin{equation}\label{desigualdad-ceros} 
   -\gamma_k<\mu_{n, k} <x_{n, k} < -\gamma_{k-1}<\mu_{n, k-1} <x_{n, k-1}<\cdots < -\gamma_{1}<\mu_{n, 1} <x_{n, 1}<0, 
  \end{equation}}
\begin{equation}
 \mu_{n, k}=x_{n, k} + O\left( \frac{1}{a^{2(1-\xi)}_{n}}\right), \hspace{0.2cm}k=1,2,\cdots N,\hspace{0.2cm} a_{n} \to \infty,
  \end{equation} where $x_{n, k}$ are real zeroes of the function
\begin{equation}
    f(\zeta):= 1- \frac{1}{a^{2(1-\xi)}_{n}}\sum_{k=1}^{N} \frac{c_k}{\zeta+\gamma_k}, \hspace{0.2cm}  0<\xi <1,
  \end{equation} together with a pair of complex-conjugate zeroes $\mu^{\pm}_{n}$,  $\mu^{+}_{n}= \overline{\mu^{-}_{n}}$, which admit the asymptotic representations when $a_{n} \to \infty$
{\small
\begin{align} 
&\mu^{\pm}_{n}(\xi)=-\frac{1}{2} \frac{1}{a_{n}^{2(1-\xi)}}\sum_{j=1}^{N}c_j+O\left(\frac{1}{a^{2(1-\xi)}_{n}}\right)\pm i\left(a_{n}+O\left(\frac{1}{a^{1-2\xi}_{n}}\right)\right),  \hspace{0.2cm}&\mbox{$0<\xi<\frac{1}{2}$}\label{asimptota-finita1}\\
&\mu^{\pm}_{n}(\xi)=-\frac{1}{2} \frac{1}{a_{n}^{2(1-\xi)}}\sum_{j=1}^{N}c_j+O\left(\frac{1}{a^{2(1-\xi)}_{n}}\right)\pm i\left(a_{n}+O\left(a^{1-2\xi}_{n}\right)\right),  \hspace{0.2cm}&\mbox{$\frac{1}{2}<\xi<1$}\label{asimptota-finita2}\\
&\mu^{\pm}_{n}(\xi)=-\frac{1}{2} \frac{1}{a_{n}}\sum_{j=1}^{N}c_j+O\left(\frac{1}{a_{n}}\right)\pm i\left(a_{n}+O\left(1\right)\right),  \hspace{0.2cm}&\mbox{$\xi=\frac{1}{2}$}\label{asimptota-finita3}
\end{align}} 
\end{lemma}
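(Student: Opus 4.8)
The plan is to clear the poles, reduce the zero-counting to a polynomial, and then separate the $N$ real zeros---located by a sign/monotonicity analysis on the real axis---from the single complex-conjugate pair, located by a perturbation argument near $\pm i a_n$. Multiplying $\ell_{n,N}$ by $\prod_{k=1}^{N}(\zeta+\gamma_k)$ produces a monic polynomial of degree $N+2$ with real coefficients, so $\ell_{n,N}$ has exactly $N+2$ zeros counted with multiplicity and its non-real zeros occur in conjugate pairs. On the real axis I would exploit $c_k>0$: for $\zeta<-\gamma_N$ every term $c_k/(\zeta+\gamma_k)$ is negative, whence $\ell_{n,N}(\zeta)=\zeta^2+a_n^2-a_n^{2\xi}\sum_k c_k/(\zeta+\gamma_k)>0$, and for $\zeta\ge 0$ the bound $\sum_k c_k/(\zeta+\gamma_k)\le\sum_k c_k/\gamma_k<1$ together with (\ref{condition-4}) gives $\ell_{n,N}(\zeta)>0$ as well; hence there are no real zeros outside $(-\gamma_N,0)$. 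Inside, as $\zeta\to-\gamma_k^{+}$ one has $\ell_{n,N}\to-\infty$ and as $\zeta\to-\gamma_{k-1}^{-}$ one has $\ell_{n,N}\to+\infty$ (with $\gamma_0:=0$ and using $\ell_{n,N}(0)>0$), so the intermediate value theorem yields at least one real zero in each of the $N$ intervals $(-\gamma_k,-\gamma_{k-1})$.

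Next I would produce the complex pair. Writing $\zeta=ia_n+w$ turns the equation $\zeta^2+a_n^2=a_n^{2\xi}\sum_k c_k/(\zeta+\gamma_k)$ into $2ia_nw+w^2=a_n^{2\xi}\sum_k c_k/(ia_n+\gamma_k+w)$; since the right-hand side equals $-ia_n^{2\xi-1}\sum_k c_k+O(a_n^{2\xi-2})$ for $w$ small, the map $w\mapsto(2ia_n)^{-1}\bigl[a_n^{2\xi}\sum_k c_k/(ia_n+\gamma_k+w)-w^2\bigr]$ is a contraction on a small disc and has a unique fixed point, giving a genuine non-real zero $\mu_n^{+}$ near $ia_n$ together with its conjugate $\mu_n^{-}$. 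This furnishes at least two non-real zeros; combined with the $\geq N$ real zeros and the total count $N+2$, it forces exactly $N$ real zeros---one per interval $(-\gamma_k,-\gamma_{k-1})$---and exactly one conjugate pair. Finally, writing $\ell_{n,N}(\zeta)=\zeta^2+a_n^2 f(\zeta)$ and using that $f$ is strictly increasing between consecutive poles with $f(x_{n,k})=0$, the identity $f(\mu_{n,k})=-\mu_{n,k}^2/a_n^2<0$ places $\mu_{n,k}$ to the left of $x_{n,k}$, which yields the interlacing chain (\ref{desigualdad-ceros}).

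For the gap $\mu_{n,k}-x_{n,k}$ I would use that near the pole $-\gamma_k$ the function $f$ is governed by its singular term, so $f'\thickapprox a_n^{-2(1-\xi)}c_k/(\zeta+\gamma_k)^2$ is of order $a_n^{2(1-\xi)}$ there; since $f(\mu_{n,k})-f(x_{n,k})=-\mu_{n,k}^2/a_n^2=O(a_n^{-2})$, the mean value theorem gives $\mu_{n,k}=x_{n,k}+O(a_n^{-2(1-\xi)})$. The asymptotics of the complex pair then come from refining the fixed point: the leading balance $2ia_nw\sim-ia_n^{2\xi-1}\sum_j c_j$ gives $\re\mu^{\pm}_n=-\tfrac12 a_n^{-2(1-\xi)}\sum_j c_j+O(a_n^{-2(1-\xi)})$, while the next order fixes $\im\mu^{\pm}_n=\pm(a_n+\text{correction})$.

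I expect the accounting of this imaginary correction to be the main obstacle. Its size is dictated by the competition, in the expansion of $\sum_k c_k/(ia_n+\gamma_k+w)$, between the quadratic term $w^2$ and the $\gamma_k$-dependent terms, and this competition reverses at $\xi=1/2$, producing the three separate remainders $O(a_n^{-(1-2\xi)})$, $O(a_n^{1-2\xi})$ and $O(1)$ recorded in (\ref{asimptota-finita1})--(\ref{asimptota-finita3}). The delicate point is to carry out these estimates in a form that depends only on $\sum_j c_j$ (hence uniformly, so that they persist through the later passage $N\to\infty$ in Theorem \ref{teorema-de-ceros-numerables}) rather than via the sharper but non-uniform finite-$N$ bounds that involve $\sum_j c_j\gamma_j$.
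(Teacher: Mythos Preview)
Your proposal is correct and essentially complete, but the route you take for the complex pair differs from the paper's. The paper clears the denominators just as you do to get a degree-$N+2$ polynomial, disposes of the real zeros by a one-line reference to the graph, and then extracts the asymptotics of $\mu_n^{\pm}$ \emph{algebraically} via Vieta's formulas: comparing $\sum_{k=1}^{N+2}\mu_{n,k}=-\sum_j\gamma_j$ with $\sum_k x_{n,k}=a_n^{-2(1-\xi)}\sum_j c_j-\sum_j\gamma_j$ gives $2\re\mu_n^{\pm}=-a_n^{-2(1-\xi)}\sum_j c_j+O(a_n^{-2(1-\xi)})$ directly, and the product formula similarly yields $|\mu_n^{\pm}|^2=a_n^2+O(a_n^{2\xi})$, from which the imaginary part follows. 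Your contraction/fixed-point argument near $ia_n$ is instead the analytic method the paper reserves for Theorem~\ref{teorema-de-ceros-infinitos}, where no polynomial (hence no Vieta) is available. Either method works here; Vieta is shorter at the finite-$N$ stage because the sum and product of all roots are read off from two coefficients, whereas your perturbative approach has the advantage---which you correctly flag---of yielding estimates that depend only on $\sum_j c_j$ and therefore pass to the limit $N\to\infty$ without the separate Rouch\'e step the paper needs. Your sign/IVT localisation of the real zeros and your mean-value argument for $\mu_{n,k}-x_{n,k}$ are more explicit than what the paper writes (it essentially asserts both), and are fine as stated.
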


\begin{proof} According to The Fundamental Theorem of Algebra, the function $\ell_{n,N}$ has exactly $N+2$ zeroes, of which  $N$ are real zeroes (see graphically) and satisfy the inequality  (\ref{desigualdad-ceros}). 

Using the Vieta's formulas, we conclude that the  zeroes $x_{n, k}$ of the function $f(\zeta)$ satisfy the relations: 
{\small
\begin{equation} \label{ceros-usando-vieta1}
    \sum_{k=1}^{N} x_{n, k} =\left(\frac{1}{a^{2(1-\xi)}_{n}}\sum_{j=1}^{N} c_j \right)- \sum_{j=1}^{N} \gamma_j , \hspace{0.5cm} \prod_{k=1}^{N}x_{n, k}=(-1)^N \prod_{j=1}^{N}\gamma_j\left(1-\frac{S_N}{a^{2(1-\xi)}_{n}}\right)
\end{equation}} and zeroes $\mu_{n,k}$ of the function $\ell_{n,N}$ satisfy: 
{\small
\begin{equation} \label{ceros-usando-vieta2}
    \sum_{k=1}^{N+2}\mu_{n,k} = - \sum_{j=1}^{N} \gamma_j , \hspace{0.5cm} \prod_{k=1}^{N+2}\mu_{n,k}=(-1)^{N+2} a^{2}_{n}\prod_{j=1}^{N}\gamma_j\left(1-\frac{S_N}{a^{2(1-\xi)}_{n}}\right)
\end{equation}}
where
\begin{equation} 
    S_N:=\sum_{j=1}^{N} \frac{c_j}{\gamma_j}. 
\end{equation} From (\ref{ceros-usando-vieta1}) and (\ref{ceros-usando-vieta2}), we obtain
{\small
\begin{equation*}
\sum_{k=1}^{N+2}\mu_{n,k}= \sum_{k=1}^{N}\left(x_{n, k}+O\left(\frac{1}{a^{2(1-\xi)}_{n}}\right)\right)+2 \re \mu^{\pm}_{n}
\end{equation*}}
{\small
\begin{equation*} 
 \prod_{k=1}^{N+2}\mu_{n,k}=\prod_{k=1}^{N}\left(x_{n, k}+O\left(\frac{1}{a^{2(1-\xi)}_{n}}\right)\right) \mu^{+}_{n}\mu^{-}_{n}, 
\end{equation*}} Here we set $\mu_{n, N+1}=\mu^{+}_{n}$ and $\mu_{n,N+2}=\mu^{-}_{n}$. Consequently, 
{\small
\begin{equation*}
 \re \mu^{\pm}_{n}= -\frac{1}{2} \frac{1}{a^{2(1-\xi)}_{n}}\sum_{j=1}^{N}c_j+O\left(\frac{1}{a^{2(1-\xi)}_{n}}\right), \hspace{0.2cm} (\re \mu^{\pm}_{n})^2+(\im \mu^{\pm}_{n})^2=a^{2}_{n}+O\left(\frac{1}{a^{-2\xi}_{n}}\right).
\end{equation*}} 
 Observe that, as $a_n \to \infty$ the following holds  
{\small
\begin{align*} 
\sqrt{a^{2}_{n}+O\left(\frac{1}{a^{-2\xi}_{n}}\right)}&= \pm a_n\left(1+\frac{1}{a^{2}_{n}}O\left(\frac{1}{a^{-2\xi}_{n}}\right)\right)^{1/2}=\pm a_n\left(1+\frac{1}{2a^{2}_{n}}O\left(\frac{1}{a^{-2\xi}_{n}}\right)\right)\\
&=\pm a_n+O\left(\frac{1}{a^{1-2\xi}_n}\right), 
\end{align*}} Therefore
\begin{align*} 
\im \mu^{\pm}_{n}=\pm a_{n}+O\left(\frac{1}{a^{1-2\xi}_{n}}\right).
\end{align*} And so when $a_{n} \to \infty$ we have asymptotic formulas (\ref{asimptota-finita1}), (\ref{asimptota-finita2}) and (\ref{asimptota-finita3}). 

Proof of Theorem \ref{teorema-de-ceros-numerables}. Consider the case $\sum_{k=1}^{\infty}c_k < + \infty$. From this convergence follows that for any   $\epsilon >0$ we can find $N$ such that
\begin{align} 
\sum_{k=N+1}^{\infty}c_k<\frac{\epsilon}{10}.
\end{align}
Consider a circle with centre  $\mu^{+}_{n}:D_{\epsilon}(\mu^{+}_{n})=\{\zeta:\zeta=\mu^{+}_{n} + \epsilon e^{i \varphi}, 0 \leq \varphi <2\pi \}$ and radius $\epsilon$ (for zero $\mu^{-}_{n}$ the argument is completely analogous).

Set the function $\ell_n(\zeta)$ in the form
\begin{align} 
\ell_n(\zeta)=\ell_{n, N}(\zeta)+ m_{n, N}(\zeta),
\end{align}  
where
\begin{align*} 
\ell_{n,N}(\zeta) &= {\zeta}^2 + a^{2}_{n} \left(1- \frac{1}{a^{2(1-\xi)}_{n}}\sum_{k=1}^{N} \frac{c_k}{\zeta+\gamma_k}\right), \hspace{0.2cm} &\mbox{$0<\xi <1$},\\
m_{n,N}(\zeta) &= -\frac{1}{a^{-2\xi}_{n}}\sum_{k=N+1}^{\infty} \frac{c_k}{\zeta+\gamma_k}, \hspace{0.2cm}  &\mbox{$0<\xi <1$}.
\end{align*}Choose a small $\epsilon > 0$ such that in the neighborhood $B_{\epsilon}({\mu}^{+}_{n})= \{\zeta:|\zeta-{\mu}^{+}_{n}| <\epsilon \}$ there are no other zeroes of the function $\ell_{n, N}(\zeta)$ and so we shall estimate the function $\ell_{n, N}(\zeta)$  on the circle $D_{\epsilon}({\mu}^{+}_{n})$. For this purpose, note that for  $\zeta \in D_{\epsilon}({\mu}^{+}_{n})$, we obtain
{\small 
\begin{align*} 
|\zeta-\mu_{n, k}| \geq \left(\left(-\frac{1}{2}\frac{1}{a^{2(1-\xi)}_{n}}\sum_{k=1}^{N} c_k - \mu_{n, k} + \epsilon \cos (\varphi)\right)^2+ (a_{n}+\epsilon \sin (\varphi))^2\right)^{1/2}\geq a_n -\epsilon. 
\end{align*}}Moreover, observe that for $\zeta \in D_{\epsilon}({\mu}^{+}_{n})$,
\begin{align*} 
|\zeta-\mu^{+}_{n}| = \epsilon;   \hspace{1cm} |\zeta-\mu^{-}_{n}| > 2a_n-\epsilon.
\end{align*} 
From the above inequalities, we obtain the following lower bound: 
\begin{align} \label{prodcuto-de-ceros1}
\left|\prod_{k=1}^{N}(\zeta-\mu_{n, k})(\zeta-\mu^{+}_{n})(\zeta-\mu^{-}_{n})\right|_{\zeta \in D_{\epsilon}({\mu}^{+}_{n})} \geq (a_n-\epsilon)^{N+1}\epsilon.
\end{align} From inequality  
\begin{align*} 
|\zeta + \gamma_j|_{\zeta \in D_{\epsilon}(\mu^{+}_{n})} & \leq \left|\left(-\frac{1}{2}\frac{1}{a^{2(1-\xi)}_{n}}\sum_{k=1}^{N} c_k\right) + \gamma_j + \epsilon \cos(\varphi) \right|+ |a_{n}+\epsilon \sin (\varphi)|\\
 &\leq \left(\frac{1}{2}\frac{1}{a^{2(1-\xi)}_{n}}\sum_{k=1}^{N} c_k \right)+ \gamma_j + \epsilon + a_{n}+\epsilon\\
&\leq a_n+2\epsilon+M_1, 
\end{align*}
 where $M_1 = \left(\frac{1}{2}\frac{1}{a^{2(1-\xi)}_{n}}\sum_{k=1}^{\infty} c_k\right) + \gamma_j$, we come to the estimate 
\begin{align}\label{producto-de-ceros2}
\left|\prod_{k=1}^{N}(\zeta- \gamma_k)\right|_{\zeta \in D_{\epsilon}({\mu}^{+}_{n})}\leq (a_n+2\epsilon+M_1)^N.
\end{align} Now, from (\ref{prodcuto-de-ceros1}) and (\ref{producto-de-ceros2}) we get
\begin{align*}
|\ell_{n,N}(\zeta)|&= \frac{|\prod_{k=1}^{N}(\zeta-\mu_{n, k})(\zeta-\mu^{+}_{n})(\zeta-\mu^{-}_{n})|}{|\prod_{k=1}^{N}(\zeta- \gamma_k)|}\\
&\geq \frac{(a_n-\epsilon)^{N+1}\epsilon}{(a_n+2\epsilon+M_1)^N}\\
&=\epsilon a_n\left(\left(1-\frac{N(3\epsilon+M_1)+\epsilon}{a_n}\right)+O\left(\frac{1}{a^{2}_n}\right)\right).
\end{align*} Hence, for a $a_n$ sufficiently large, the following inequality holds 
\begin{align}\label{cota-por-abajo}
|\ell_{n,N}(\zeta)| \geq \frac{\epsilon a_n}{2}. 
\end{align} Now, for a $a_n$ sufficiently large, we estimate the upper bound of the function $m_{n,N}(\zeta)$: 
{\small
\begin{align}\label{cota-por-arriba}
|m_{n,N}(\zeta)| &\leq  a^{2 \xi}_{n}\sum_{k=N+1}^{\infty} \frac{c_k}{|\zeta+\gamma_k|}\nonumber \\ 
&\leq a^{2 \xi}_{n}\sum_{k=N+1}^{\infty} \frac{c_k}{a_n-\epsilon},  \hspace{0.4cm}\zeta \in D_{\epsilon}({\mu}^{+}_{n})\\
&\leq \frac{a^{2 \xi}_{n}}{(a_n-\epsilon)}\frac{\epsilon}{10}\leq a^{2 \xi -1}_{n}\frac{\epsilon}{5} < a_n\frac{\epsilon}{5}, \hspace{0.4cm} 0<\xi < 1.\nonumber
\end{align}} 
That is, from inequality (\ref{cota-por-abajo}) and (\ref{cota-por-arriba}) we get
\begin{align}
|\ell_{n,N}(\zeta)| \geq \frac{\epsilon a_n}{2} > a_n\frac{\epsilon}{5} > |m_{n,N}(\zeta)|
\end{align} and so, using the Rouche's theorem we conclude that in the circle $D_{\epsilon}({\mu}^{+}_{n})$  for a $a_n$ sufficiently large, the function $\ell_n(\zeta)$ has one simple zero $\mu^{+}_{n}$. 

Continuing with the proof of  Theorem \ref{teorema-de-ceros-numerables}, let us first prove that the set of zeroes of the function 
\begin{align} \label{funtion-meromorfhic-zerousnumerables}
 \frac{\ell_{n}(\zeta)}{a^{2}_{n}}= \frac{{\zeta}^2}{a^{2}_{n}}+ \left(1- \frac{1}{a^{2(1-\xi)}_{n}}\sum_{k=1}^{\infty} \frac{c_k}{\zeta+\gamma_k}\right), \hspace{0.2cm}  0<\xi <1,
\end{align} coincides with the union of countable series of zeroes $\{\mu_{n, k}|k \in \mathbb{N}\}$ and a pair of complex-conjugate zeroes $\mu^{\pm}_{n}$,  $\mu^{+}_{n}= \overline{\mu^{-}_{n}}$. Moreover, let us prove that  real zeroes $\{\mu_{n, k}|k \in \mathbb{N}\}$ satisfy the inequalities (\ref{inequality-sum-finit}). Secondly, we prove that for the complex-conjugate zeroes $\mu^{\pm}_{n}$, asymptotic formulas (\ref{asymptota-sum-finita-para-mu1}), (\ref{asymptota-sum-finita-para-mu2}) and (\ref{asymptota-sum-finita-para-mu3}) indeed hold. 

For each $n \in \mathbb{N}$ fixed, consider on the complex plane a rectangular contour 
$\Gamma = \{\Gamma^{+}\cup\Gamma^{-}\cup \Gamma^{*} \cup 
\bar{\Gamma}^{*}\}$, where
\begin{align*} 
  &\Gamma^{\pm}=\{\zeta \in \mathbb{C}:\re \zeta=\pm X, |\im \zeta|\leq Y, X>0,  Y>0\},\\
  &\Gamma^{*}=\{\zeta \in \mathbb{C}:|\re \zeta|\leq X, \im \zeta=\pm Y, X>0, Y>0 \},\\
  &\bar{\Gamma}^{*} =\{\zeta \in \mathbb{C}:\bar{\zeta} \in \Gamma^{*}\}.
  \end{align*} Set 
\begin{align}\label{functions-meromorphic-f-and-g}
f(\zeta)=1- \frac{1}{a^{2(1-\xi)}_{n}}\hat{K}(\zeta), \hspace{1cm}  g(\zeta)= \frac{{\zeta}^2}{a^{2}_{n}}, \hspace{1cm}  0<\xi <1,
\end{align}where $\hat{K}(\zeta)= \sum_{k=1}^{\infty} \frac{c_k}{\zeta+\gamma_k}$. Similarly to the proof given in \cite{VR}, we can choose $X>0, Y>0$ such that for all $\zeta \in \Gamma$, 
\begin{align}\label{desigualdad-f-y-g}
|f(\zeta)|< |g(\zeta)|.   
\end{align} We will prove that for every side of rectangular contour, the inequality (\ref{desigualdad-f-y-g}) holds. For all $\zeta \in \Gamma^{-}$, we have 
the estimate 
{\small
\begin{align*}
|f(\zeta)|& =|1- a^{2(\xi-1)}_{n}\hat{K}(\zeta)|= \left|1- \frac{1}{a^{2(1-\xi)}_{n}}\hat{K}(\zeta)\right| \leq 1+ \left|\frac{1}{a^{2(1-\xi)}_{n}}\hat{K}(\zeta)\right|\\
&=1+ \frac{1}{a^{2(1-\xi)}_{n}}\left|\sum_{k=1}^{\infty} \frac{c_k}{-X+iy+\gamma_k} \right|\leq 1+ \frac{1}{a^{2(1-\xi)}_{n}}\sum_{k=1}^{\infty} \frac{c_k}{|-X+iy+\gamma_k|}\\
&\leq 1+ \frac{1}{a^{2(1-\xi)}_{n}}\sum_{k=1}^{\infty} \frac{c_k}{|-X+\gamma_k|}:=q(X),  \hspace{1cm}  0<\xi <1.
\end{align*}
} Set $X=X_N:=(\gamma_N+\gamma_{N+1})/2$, where $N$ is a sufficiently large natural number, we show that
\begin{align}
\inf_{N}\left\{ \frac{q(X_N)}{X^{2}_{N}}\right \}=0.  
\end{align} Note that $q(X_N)$ is decomposed as 
{\small
\begin{align*}
 q(X_N)=\frac{1}{a^{2(1-\xi)}_{n}}\left(\sum_{k=1}^{N} \frac{c_k}{X_N-\gamma_k}+ \sum_{k=N+1}^{\infty} \frac{c_k}{\gamma_k-X_N}\right)+1.
\end{align*}}
 Let $\delta_N:=\gamma_{N+1}-\gamma_{N}$. Since
{\small
\begin{align*}
X_N-\gamma_k&= \frac{1}{2}(\gamma_{N+1}-\gamma_k+\gamma_{N}-\gamma_k)\\
&\geq \frac{1}{2}(\gamma_{N+1}-\gamma_k)\geq \frac{1}{2}\delta_N, \hspace{0.3cm} k=1,\dots, N\\              
 \gamma_k-X_N&\geq  \frac{1}{2}(\gamma_{k}-\gamma_N)\geq  \frac{1}{2}\delta_N, \hspace{0.3cm} k=N+1, N+2, \dots            
\end{align*}} we see that
\begin{align*}
q(X_N)&\leq \frac{2}{a^{2(1-\xi)}_{n}}\left(\sum_{k=1}^{N} \frac{c_k}{\gamma_{N+1}-\gamma_k}+ \sum_{k=N+1}^{\infty} \frac{c_k}{\gamma_k-\gamma_N}\right)+1\\
&\leq \left(\frac{2}{\delta_N a^{2(1-\xi)}_{n}}\sum_{k=1}^{\infty} c_k\right)+1:=\left(\frac{2S_1}{\delta_Na^{2(1-\xi)}_{n}}\right)+1
\end{align*} and
 \begin{align}
\frac{q(X_N)}{X^{2}_N}\leq \frac{1}{\gamma_{N}^{2}}\left(\frac{2S_1}{\delta_Na^{2(1-\xi)}_{n}}+1\right).
\end{align} Taking into account the condition  (\ref{suprem-de-las-gammas}) and for $N$ sufficiently large, we get 
 \begin{align*}
\inf_{N}\left\{ \frac{q(X_N)}{X^{2}_{N}}\right \}=0.  
\end{align*} It follows that for a given $a_n$, the inequality 
\begin{align}
\frac{1}{a^{2}_n}> \frac{1}{\gamma_{N}^{2}}\left(\frac{2S_1}{\delta_Na^{2(1-\xi)}_{n}}+1\right)
\end{align} holds for $N$ sufficiently large. Then, for all $\zeta \in \Gamma^{-}$, we get
{\small
\begin{align}
|g(\zeta)|\geq  \frac{X^{2}_{N}}{a^{2}_{N}}> X^{2}_{N}\left( \frac{1}{\gamma_{N}^{2}}\left(\frac{2S_1}{\delta_Na^{2(1-\xi)}_{n}}+1\right)\right)\geq X^{2}_{N}\frac{q(X_N)}{X^{2}_N}\geq|f(\zeta)|. 
\end{align}} For all $\zeta \in \Gamma^{+}$, we have
{\small
\begin{align*}
|f(\zeta)|& \leq 1+ \frac{1}{a^{2(1-\xi)}_{n}}\sum_{k=1}^{\infty} \frac{c_k}{|X_N+iy+\gamma_k|}\\
 &\leq 1+ \frac{1}{X_Na^{2(1-\xi)}_{n}}\sum_{k=1}^{\infty} c_k :=1+\frac{S_1}{X_Na^{2(1-\xi)}_{n}}<+\infty, \hspace{2cm}  0<\xi <1.
\end{align*}} Hence, if we choose  $X=X_N$  such that 
\begin{align*}
\frac{X_N^2}{a^{2}_{n}}>1+\frac{S_1}{X_Na^{2(1-\xi)}_{n}}
\end{align*}then 
\begin{align*}
X_N^3- a^{2}_{n}X_N-\frac{a^{2}_{n}}{a^{2(1-\xi)}_{n}}S_1 >0. 
\end{align*}Let $X_N>1$, then 
\begin{align*}
X_N^3- a^{2}_{n}X_N-a^{2}_{n}S_1> X_N^3- a^{2}_{n}X_N-\frac{a^{2}_{n}}{a^{2(1-\xi)}_{n}}S_1X_N>0 
\end{align*}holds, if we choose $X_N>a_n\sqrt{1+\frac{S_1}{a^{2(1-\xi)}_{n}}}$. Hence, $|f(\zeta)|<|g(\zeta)|$  for all $\zeta  \in \Gamma^{+}$.

Now, consider a horizontal segment of the contour  
\begin{align*} 
  \Gamma^{*}=\{\zeta \in \mathbb{C}:|\re \zeta|\leq X_N, \im \zeta=+ Y\}.
   \end{align*} It follows that, for all  $\zeta  \in \Gamma^{*}$ we get
\begin{align}
\inf_{\zeta  \in \Gamma^{*}}\left| \frac{\zeta^{2}}{a^{2}_{n}}\right|=\frac{Y^2}{a^{2}_{n}}.
\end{align} 
For all $\zeta  \in \Gamma^{*}$ we have
{\small
\begin{align*}
|f(\zeta)|& \leq 1+ \frac{1}{a^{2(1-\xi)}_{n}}\sum_{k=1}^{\infty} \frac{c_k}{|x+iY+\gamma_k|}\\
 &\leq 1+ \frac{1}{Ya^{2(1-\xi)}_{n}}\sum_{k=1}^{\infty} c_k:=1+\frac{S_1}{Ya^{2(1-\xi)}_{n}} <+\infty, \hspace{2cm}  0<\xi <1.
\end{align*}} Hence, if we choose  $Y$  such that 
\begin{align*}
\frac{Y^2}{a^{2}_{n}}>1+\frac{S_1}{Ya^{2(1-\xi)}_{n}} 
\end{align*}then 
\begin{align*}
Y^3- a^{2}_{n}Y-\frac{a^{2}_{n}}{a^{2(1-\xi)}_{n}}S_1 >0. 
\end{align*}Let $Y>1$, then 
\begin{align*}
Y^3- a^{2}_{n}Y-a^{2}_{n}S_1> Y^3- a^{2}_{n}Y-\frac{a^{2}_{n}}{a^{2(1-\xi)}_{n}}S_1Y>0 
\end{align*}holds, if we choose $Y>a_n\sqrt{1+\frac{S_1}{a^{2(1-\xi)}_{n}}}$. Hence, the inequality (\ref{desigualdad-f-y-g})  holds for all $\zeta  \in \Gamma^{*}$. 

 In  case of  $\zeta  \in \bar{\Gamma}^{*}$, the inequality (\ref{desigualdad-f-y-g}) holds under the same condition since $\overline{f(\zeta)}=f(\bar{\zeta})$ and  $\overline{g(\zeta})=g(\bar{\zeta})$. Thus, since $|f(\zeta)|<|g(\zeta)|$ for all $\zeta  \in \Gamma$, then
by the Rouche's theorem and argument principle, we obtain
 \begin{align*}
N(g)-P(g)=N(f+g)-P(f+g),
\end{align*} where $N$ and $P$ denote, respectively,  the number of zeroes and poles inside the contour $\Gamma$, with each zero and pole counted as many times as its order or multiplicity, respectively, indicate. By the definition of the function $g(\zeta)$, $N(g)-P(g)=2$. Inside the contour $\Gamma$, the function $f+g$ has $N$ poles: $ -\gamma_N, -\gamma_{N-1},...,-\gamma_{1}$. Therefore,  $N(f+g)= N+2$. It is not difficult to note graphically (see also \cite{ISH}) that  the function $f+g$ has $N+1$ or $N$ real zeroes inside the contour  $\Gamma$, which satisfy inequality (\ref{desigualdad-ceros-infinitos}), and depending on whether $\mu_{N,n}<X_N$ or $\mu_{N,n}\geq X_N$. But if $\mu_{N,n}<X_N$, then $f+g$ has one complex zero inside $\Gamma$ and this is not possible since $\overline{(f+g)}(\zeta)=(f+g)(\overline{\zeta})$. Consequently, there are exactly two complex zeroes  $\mu^{\pm}_{n}$ inside $\Gamma$, where  $\mu^{+}_{n}=\overline{\mu^{-}_{n}}$.

Proof of Theorem \ref{teorema-de-ceros-infinitos}. As in the Theorem \ref{teorema-de-ceros-numerables} we consider the same function (\ref{funtion-meromorfhic-zerousnumerables}) and we prove that the set of its zeroes coincides with the union of countable series of zeroes $\{\lambda_{n, k}|k \in \mathbb{N}\}$ and a pair of complex-conjugate zeroes $\lambda^{\pm}_{n}$,  $\lambda^{+}_{n}= \overline{\lambda^{-}_{n}}$. Moreover, we prove that  real zeroes $\{\lambda_{n, k}|k \in \mathbb{N}\}$ satisfy the inequalities (\ref{desigualdad-ceros-infinitos}). Secondly, we prove that for the complex-conjugate zeroes $\lambda^{\pm}_{n}$, asymptotic formulas (\ref{aimptotas-de-ceros-infinitos-1})$-$(\ref{aimptotas-de-ceros-infinitos-3}) indeed hold. 

For each fixed natural number, we will prove that for every side of rectangular contour $\Gamma = \{\Gamma^{+}\cup\Gamma^{-}\cup \Gamma^{*} \cup \bar{\Gamma}^{*}\}$, the inequality (\ref{desigualdad-f-y-g}) holds. The $f(\zeta)$ and $g(\zeta)$ are the same functions as in (\ref{functions-meromorphic-f-and-g}). Now, for all $\zeta \in \Gamma^{-}$, we have the same 
estimate 
{\small
\begin{align*}
|f(\zeta)| \leq 1+ \frac{1}{a^{2(1-\xi)}_{n}}\sum_{k=1}^{\infty} \frac{c_k}{|-X+\gamma_k|}:=q(X),  \hspace{1cm}  0<\xi <1.
\end{align*}
} Set $X=X_N:=(\gamma_N+\gamma_{N+1})/2$, where $N$ is a sufficiently large natural number, we have  
\begin{align}
\inf_{N}\left\{ \frac{q(X_N)}{X^{2}_{N}}\right \}=0.  
\end{align} Indeed, we note that
{\small
\begin{align*}
 q(X_N)=\frac{1}{a^{2(1-\xi)}_{n}}\left(\sum_{k=1}^{N} \frac{c_k}{X_N-\gamma_k}+ \sum_{k=N+1}^{\infty} \frac{c_k}{\gamma_k-X_N}\right)+1,
\end{align*}} then since
{\small
\begin{align*}
X_N-\gamma_k&\geq \frac{1}{2}(\gamma_{N+1}-\gamma_k), \hspace{0.3cm} k=1,\dots, N\\             
 \gamma_k-X_N&\geq  \frac{1}{2}(\gamma_{k}-\gamma_N), \hspace{0.3cm} k=N+1, N+2, \dots                 
\end{align*}} it follows that 
{\small
\begin{align*}
 q(X_N)&\leq \frac{2}{a^{2(1-\xi)}_{n}}\left(\sum_{k=1}^{N} \frac{c_k}{\gamma_{N+1}-\gamma_k}+ \sum_{k=N+1}^{\infty} \frac{c_k}{\gamma_k-\gamma_N}\right)+1\\
&=\frac{2}{a^{2(1-\xi)}_{n}}\left(\sum_{k=1}^{N} \frac{c_k}{\gamma_k}\left(\frac{\gamma_{N+1}}{\gamma_k}-1\right)^{-1}+ \sum_{k=N+1}^{\infty} \frac{c_k}{\gamma_k}\left(1-\frac{\gamma_N}{\gamma_k}\right)^{-1}\right)+1.
\end{align*}} By the above hypothesis (see (\ref{gama-k+1-menos-gama-k-1}) and  (\ref{gama-k+1-menos-gama-k-2})), where   for
{\small
\begin{align*}
\gamma_{k+1}-\gamma_{k}=B (k+1)^{\beta}- Bk^{\beta}+ O( k^{\beta-1}) \thickapprox k^{\beta-p}, \hspace{0.2cm} 1\leq p <\infty,  \hspace{0.2cm}k \to \infty,
\end{align*}} $p/2 <\beta$ holds, we conclude that for all $k=1, \dots, N$,
{\small 
\begin{align*}
\left(\frac{\gamma_{N+1}}{\gamma_k}-1\right)^{-1}&\leq \left(\frac{\gamma_{N+1}}{\gamma_N}-1\right)^{-1}= \frac{\gamma_{N}}{\gamma_{N+1}-\gamma_N}\\
&\leq C_1N^{p-\beta}\left(BN^{\beta}+O\left(N^{\beta -1}\right)\right)\\
&=C_1N^{p}\left(B+O\left(\frac{1}{N}\right)\right)=C_1N^{p}B.
\end{align*}} Similarly, we can prove that for all $k=N+1, N+2 \dots $ the following expression holds.
{\small 
\begin{align*}
\left(1-\frac{\gamma_{N}}{\gamma_k}\right)^{-1}&\leq \left(1-\frac{\gamma_{N}}{\gamma_{N+1}}\right)^{-1}=\frac{\gamma_{N+1}}{\gamma_{N+1}-\gamma_N}\\
&\leq  C_1N^{p-\beta}\left(B(N+1)^{\beta}+O\left((N+1)^{\beta -1}\right)\right)\\
&=C_1N^p\left(1+O\left(\frac{1}{N}\right)\right)\left(B+O\left(\frac{1}{N+1}\right)\right)\\ 
&=C_1BN^p, \hspace{0.5cm} N \to \infty.
\end{align*}} Here and in what follows,  $C_1>0$. Then
\begin{align}
q(X_N)\leq \frac{2C_1N^{p}B}{a^{2(1-\xi)}_{n}}\left(\sum_{k=1}^{\infty} \frac{c_k}{\gamma_k}\right)+1=\frac{2C_1N^{p}B}{a^{2(1-\xi)}_{n}}S+1
\end{align}and
 \begin{align}
\frac{q(X_N)}{X^{2}_N}&\leq \frac{\frac{2C_1N^{p}B}{a^{2(1-\xi)}_{n}}S+1}{\gamma^{2}_N}=\frac{\frac{2C_1N^{p}B}{a^{2(1-\xi)}_{n}}S+1}{(BN^{\beta}+O(N^{\beta-1}))^2}\\
&=\frac{\frac{2C_1N^{p-2\beta}B^{-1}}{a^{2(1-\xi)}_{n}}S+O\left(N^{-2\beta}\right)}{1+O\left(\frac{1}{N}\right)+O\left(\frac{1}{N^2}\right)}.
\end{align} This equation converges to zero as $N \to \infty$ since $2\beta>p$. This means that  
 \begin{align*}
\inf_{N}\left\{ \frac{q(X_N)}{X^{2}_{N}}\right \}=0.  
\end{align*} It follows that for a given $a_n$, the inequality 
\begin{align}
\frac{1}{a^{2}_n}>\frac{2C_1N^{p-2\beta}B^{-1}}{a^{2(1-\xi)}_{n}}S+O(N^{-2\beta})
\end{align} holds for $N$ sufficiently large. Then, for all $\zeta \in \Gamma^{-}$, we get
{\small
\begin{align}
|g(\zeta)|\geq  \frac{X^{2}_{N}}{a^{2}_{n}}> X^{2}_{N}\left(\frac{2C_1N^{p-2\beta}B^{-1}}{a^{2(1-\xi)}_{n}}S+O(N^{-2\beta})\right)\geq X^{2}_{N}\frac{q(X_N)}{X^{2}_N}\geq|f(\zeta)|. 
\end{align}} For all $\zeta \in \Gamma^{+}$,
{\small
\begin{align*}
|f(\zeta)|& \leq 1+ \frac{1}{a^{2(1-\xi)}_{n}}\sum_{k=1}^{\infty} \frac{c_k}{|X+iy+\gamma_k|}\\
 &\leq 1+ \frac{1}{a^{2(1-\xi)}_{n}}\sum_{k=1}^{\infty} \frac{c_k}{|X+\gamma_k|} \\
&  \leq 1+ \frac{1}{a^{2(1-\xi)}_{n}}\sum_{k=1}^{\infty} \frac{c_k}{\gamma_k}:=1+\frac{S}{a^{2(1-\xi)}_{n}}<+\infty,  \hspace{2cm}  0<\xi <1.
\end{align*}} And so, for all $\zeta \in \Gamma^{+}$, the following is satisfied
{\small
\begin{align*}
|g(\zeta)|\geq  \frac{X^{2}_{N}}{a^{2}_{n}}> 1+ \frac{1}{a^{2(1-\xi)}_{n}}\sum_{k=1}^{\infty} \frac{c_k}{\gamma_k} \geq |f(\zeta)|
\end{align*}} if $X_N>a_n\sqrt{\frac{S}{a^{2(1-\xi)}_{n}}+1}$. 

Now, consider a horizontal segment of the contour  
\begin{align*} 
  \Gamma^{*}=\{\zeta \in \mathbb{C}:|\re \zeta|\leq X_N, \im \zeta=+ Y\}.
   \end{align*} It follows that, for all  $\zeta  \in \Gamma^{*}$ we get
\begin{align}
\inf_{\zeta  \in \Gamma^{*}}\left| \frac{\zeta^{2}}{a^{2}_{n}}\right|=\frac{Y^2}{a^{2}_{n}}.
\end{align} Let us denote
\begin{align*} 
  \Gamma^{*}_{1}=\{\zeta \in \mathbb{C}:0 \leq \re \zeta \leq X_N, \im \zeta=+Y\},\\
  \Gamma^{*}_{2}=\{\zeta \in \mathbb{C}:-X_N \leq \re \zeta \leq 0, \im \zeta=+Y\}.
   \end{align*}
For all $\zeta  \in \Gamma^{*}_{1}$ we have
{\small
\begin{align*}
|f(\zeta)|& \leq 1+ \frac{1}{a^{2(1-\xi)}_{n}}\sum_{k=1}^{\infty} \frac{c_k}{|x+iY+\gamma_k|}\\
 &\leq 1+ \frac{1}{a^{2(1-\xi)}_{n}}\sum_{k=1}^{\infty} \frac{c_k}{\gamma_k}\left|\frac{x+iY}{\gamma_k}+1\right|^{-1}\\
&  \leq 1+ \frac{1}{a^{2(1-\xi)}_{n}}\sum_{k=1}^{\infty} \frac{c_k}{\gamma_k}:=1+\frac{S}{a^{2(1-\xi)}_{n}},  \hspace{3cm}  0<\xi <1.
\end{align*}} Hence, if we choose  $Y$  such that $Y>a_n\sqrt{\frac{S}{a^{2(1-\xi)}_{n}}+1}$,  then $\forall \zeta  \in \Gamma^{*}_{1}$ the inequality (\ref{desigualdad-f-y-g}) holds. Note that for all  $\zeta  \in \Gamma^{*}_{2}\cap[\zeta_1, \zeta_2]$, where $\zeta_1=-\gamma_{M+1}+iY, \zeta_2= -\gamma_{M}+iY$, $1\leq M < N$, $M \in \mathbb{N}$, immediately we have the following inequality:
{\small
\begin{align*}
|f(\zeta)|& \leq 1+ \frac{1}{a^{2(1-\xi)}_{n}}\sum_{k=1}^{\infty} \frac{c_k}{|x+iY+\gamma_k|}\\
 &\leq 1+ \frac{1}{a^{2(1-\xi)}_{n}}\sum_{k=1}^{\infty} \frac{c_k}{\gamma_k}\left|\frac{x+iY}{\gamma_k}+1\right|^{-1}\\
&  \leq 1+ \frac{1}{a^{2(1-\xi)}_{n}}\left(\sum_{k=1}^{M-1} +\sum_{k=M+2}^{\infty}\right) \frac{c_k}{\gamma_k}\left|\frac{x}{\gamma_k}+1\right|^{-1}+\frac{1}{a^{2(1-\xi)}_{n}}\frac{c_M+c_{M+1}}{Y},  \hspace{0.3cm}  0<\xi <1.
\end{align*}} And so, for $M$ sufficiently large we have 
{\small
\begin{align*}
\left|\frac{x}{\gamma_k}+1\right|^{-1}&<\left(\frac{\gamma_M}{\gamma_{M-1}}-1\right)^{-1}=\frac{\gamma_{M-1}}{\gamma_{M}-\gamma_{M-1}}\\
&\leq C_1(M-1)^{p-\beta}\left(B(M-1)^{\beta}+O((M-1)^{\beta -1})\right)\\
&= C_1(M-1)^p\left(B+O\left(\frac{1}{M-1}\right)\right)\\
&= C_1(M-1)^p B\leq C_1(N-1)^p B  <  C_1N^p B, \hspace{0.2cm} k=1, 2, \ldots , M-1.
\end{align*}}
{\small
\begin{align*}
\left|\frac{x}{\gamma_k}+1\right|^{-1}&<\left(1-\frac{\gamma_{M+1}}{\gamma_{M+2}}\right)^{-1}=\frac{\gamma_{M+2}}{\gamma_{M+2}-\gamma_{M+1}}\\
&\leq C_1(M+1)^{p-\beta}\left(B(M+2)^{\beta}+O((M+2)^{\beta -1})\right)\\
&=C_1(M+1)^p\left(1+O\left(\frac{1}{M+1}\right)\right)\left(B+O\left(\frac{1}{M+2}\right)\right)\\
&=C_1(M+1)^pB\leq C_1N^pB, \hspace{0.2cm} k=M+2, M+3, \ldots \hspace{0.5cm} 1\leq M < N, 
\end{align*}} then for all $\zeta  \in \Gamma^{*}_{2}$ we get
{\small
\begin{align*}
|f(\zeta)|&\leq \frac{2}{a^{2(1-\xi)}_{n}}C_1\cdot BN^p S +\frac{c_M+c_{M+1}}{Y}+1\\
&\leq \frac{2}{a^{2(1-\xi)}_{n}}C_1\cdot BN^p\cdot S+\frac{C_{N+1}}{Y}+1,\hspace{0.3cm} C_{N+1}=\sup_{1\leq k\leq N+1} c_k.
\end{align*}} Therefore, for all  $\zeta  \in \Gamma^{*}_{2}$, inequality $|f(\zeta)|<|g(\zeta)|$  holds if  
\begin{align*}
 \frac{Y^2}{a^{2}_{n}}> \frac{2}{a^{2(1-\xi)}_{n}}C_1\cdot BN^p\cdot S+\frac{C_{N+1}}{Y}+1
\end{align*} or what is the same if 
\begin{align*}
 Y^3-\left(\frac{2}{a^{2(1-\xi)}_{n}}C_1\cdot BN^p\cdot S+1\right)a^{2}_{n}Y-2a^{2}_{n}C_{N+1} >0.
\end{align*} Let $Y>1$, then 
{\small
\begin{align*}
 Y^3-\left(\frac{2C_1\cdot BN^p\cdot S}{a^{2(1-\xi)}_{n}}+1\right)a^{2}_{n}Y-2C_{N+1}a^{2}_{n}>Y^3-\left(\frac{2C_1\cdot BN^p\cdot S}{a^{2(1-\xi)}_{n}}+2C_{N+1}+1\right)a^{2}_{n}Y> 0
\end{align*}} holds, if we choose $Y>a_n\sqrt{\left(\frac{2C_1\cdot BN^p\cdot S}{a^{2(1-\xi)}_{n}}\right)+2C_{N+1}+1}$. Hence, $|f(\zeta)|<|g(\zeta)|$ holds for all  $\zeta  \in \Gamma^{*}$ if  
 \begin{align*}
Y>a_n\cdot \max \left(\sqrt{\left(\frac{2C_1\cdot BN^p\cdot S}{a^{2(1-\xi)}_{n}}\right)+2C_{N+1}+1}, \sqrt{\frac{S}{a^{2(1-\xi)}_{n}}+1}\right).
\end{align*} In  case of  $\zeta  \in \bar{\Gamma}^{*}$, the inequality  $|f(\zeta)|<|g(\zeta)|$ holds under the same condition since $\overline{f(\zeta)}=f(\bar{\zeta})$ and  $\overline{g(\zeta})=g(\bar{\zeta})$. The rest is followed by the proof of Theorem \ref{teorema-de-ceros-numerables}. 

To finish the proof of Theorem  \ref{teorema-de-ceros-infinitos} the basic idea followed  is  to replace the function 
\begin{align*}
\hat{K}(\zeta)= \sum_{k=1}^{\infty} \frac{c_k}{\zeta +\gamma_k} 
\end{align*} by its approximation expressed by the integral
\begin{align*}
h(\zeta)= \int_{1}^{\infty} \frac{A}{t^{\alpha}(\zeta +B t^{\beta})}dt, 
\end{align*} and then to estimate the function $h(\zeta)$ in the region $\{\zeta \in
\mathbb{C}: |\arg \zeta|< \pi-\delta, \delta>0\}$. The approximation of the function 
$\hat{K}(\zeta)$ by $h(\zeta)$ is established in Lemma \ref{lema-sobre-aproximacion}.
\begin{lemma}\label{lema-sobre-aproximacion} If $|\arg(\zeta)|<\pi-\delta$, $\delta>0$, then
\begin{align*}
|\hat{K}(\zeta)-h(\zeta)| \lesssim \frac{1}{|\zeta|}
\end{align*}
\end{lemma}
\begin{proof} This assertion was proved in the paper \cite{VR}. 
\end{proof}

 Let us prove that complex zeroes of the function $l_{n}(\zeta)$ can be asympotically represented in the form $\lambda^{\pm}_{n}=\pm i a_n +\tau_n a_n$, $n \in \mathbb{N}$ as  $n \to \infty$, where $\tau_n \in \mathbb{R}$ is a numerical sequence. For this purpose, it suffices to show that the asymptotic representation $\lambda^{+}_{n}$ satisfies the equation
\begin{align*}
\frac{\hat{K}(\lambda^{+}_{n})}{a^{2(1-\xi)}_{n}}=\frac{\lambda^{+}_{n}}{a^{2}_{n}}+1,
\end{align*}which is equivalent to the equation
\begin{align*}
\hat{K}(\lambda^{+}_{n})= a^{2(1-\xi)}_{n} \tau_n(\tau_n+2i), \hspace{0.3cm}\lambda^{+}_{n}=i a_n +\tau_n a_n.
\end{align*}Hence, we obtain  
\begin{align}\label{tau-igual-a-suma}
\tau_n= \frac{\hat{K}(\lambda^{+}_{n})}{a^{2(1-\xi)}_{n}(\tau_n+2i)}.
\end{align} Denote  $g_n(\tau)$ by
\begin{align}
g_n(\tau)= \frac{\hat{K}(i a_n +\tau a_n)}{a^{2(1-\xi)}_{n}(\tau+2i)}, \hspace{0.3cm}\zeta_n=  i a_n +\tau a_n. 
\end{align} Then the equation (\ref{tau-igual-a-suma}) can be rewritten in the form 
$\tau_n=g_n(\tau_n)$. Hence, it can be concluded that  $\tau_n$ is a fixed point of the map $\tau \to g_n(\tau)$ for $n \to \infty$. Therefore, it suffices to prove that for $n \to \infty$ the map  $\tau \to g_n(\tau)$ is a contraction. Thus, the desired solution $\tau_n \in \mathbb{R}$ will be found as the limit of the sequence $\tau^{k}_{n}$ as $k \to \infty$, where $\tau^{k}_{n}=g(\tau^{k-1}_{n})$, $\tau^{0}_{n}=0, \tau^{1}_{n}\not=\tau_{n}$.

Let us prove that the mapping $\tau \to g_n(\tau)$ is a contraction as $n \to \infty$. This follows from the inequality
\begin{align*}
|g'_n(\tau)|=\left|\frac{\hat{K}(\lambda^{+}_{n})-{\hat{K}}'(\lambda^{+}_{n})(a_n\tau+2ia_n)}{a^{2(1-\xi)}_{n}(\tau+2i)^{2}}\right|\leq \frac{1}{a^{2(1-\xi)}_{n}}\left(|{\hat{K}}'(\lambda^{+}_{n})(2\zeta_n)|+|\hat{K}(\lambda^{+}_{n})|\right)
\end{align*} with $|\tau|<\frac{1}{2}$ and Lemma \ref{lema-sobre-convergencia-suma}.
\begin{lemma} \label{lema-sobre-convergencia-suma} On the region $\{\zeta \in \mathbb{C}: |\arg \zeta|< \pi-\delta, \delta>0\}$ the following relations hold: 
$|\zeta {\hat{K}}'(\zeta)|\to 0$ and  $|\hat{K}(\zeta)|\to 0$ as $|\zeta| \to \infty$. 
\end{lemma}
\begin{proof}The proof of this Lemma was displayed in \cite{VR}. But here is important to mention that
\begin{align*}
g_n(\tau_n)=-\frac{i}{2 }\frac{\hat{K}(ia_n)}{a^{2(1-\xi)}_{n}}[1+O(\tau_n)], \hbox{ $n \to \infty$}.
\end{align*} Indeed, according to Taylor series at  $\tau_n=0$, we get
{\small
\begin{align*}
\frac{\hat{K}(ia_n+\tau_na_n)}{(\tau_n+2i)}=\frac{\hat{K}(ia_n)}{2i}-\frac{\tau_n}{4}({\hat{K}}'(ia_n)(2ia_n)-\hat{K}(ia_{n}))+O(\tau^{2}_n) =-\frac{i}{2}\hat{K}(ia_n)[1+O(\tau_n)].
\end{align*}} Thus, for the sequence $\tau_n \in \mathbb{R}$ the following asymptotic formula holds:
\begin{align}\label{formula-simpotica-de-tau}
\tau_n= -\frac{i}{2 }\frac{\hat{K}(ia_n)}{a^{2(1-\xi)}_{n}}[1+O(\tau_n)], \mbox{ $n \to \infty$}.
\end{align} Further, in order to obtain asymptotic formulas (\ref{aimptotas-de-ceros-infinitos-1})$-$(\ref{aimptotas-de-ceros-infinitos-3}) we use the following Lemma \ref{lema-sobre-convergencia-integral}. 
\end{proof}
\begin{lemma}\label{lema-sobre-convergencia-integral} If $|\arg \zeta|< \pi-\delta, \delta>0$, then 
\begin{align}
&\hat{K}(\zeta)=\frac{AB^{r-1}}{\beta|\zeta|^{r}}\int_{0}^{\infty}\frac{dt}{t^{r}(\exp(i\varphi)+t)}+O\left(\frac{1}{|\zeta|}\right), &\mbox{for $0<r<1$},\\
&\hat{K}(\zeta)=\frac{A}{\beta}\frac{\ln\left|\frac{\zeta}{B}+1\right|}{\zeta}+O\left(\frac{1}{|\zeta|}\right),  &\mbox{for $r=1$}.
\end{align}
\end{lemma}
\begin{proof} The proof of this Lemma was displayed in \cite{VR}.
\end{proof}
 And so, the following asymptotic formula, in the case $0<r<1$,
{\small
\begin{align*}
-\frac{i}{2}\hat{K}(ia_n)=-\frac{i}{2}\frac{AB^{r-1}}{\beta|a_n|^{r}}\int_{0}^{\infty}\frac{dt}{t^{r}(i+t)}+O\left(\frac{1}{|a_n|}\right)=-\frac{AD}{\beta B^{1-r}}\frac{1}{a^{r}_n} +O\left(\frac{1}{a_n}\right),
\end{align*}}  can be obtained  using the Lemma  \ref{lema-sobre-convergencia-integral} and result
{\small
\begin{align*}
D=\frac{i}{2}\int_{0}^{\infty}\frac{dt}{t^{r}(i+t))}=\frac{1}{2}\frac{\pi}{\sin(\pi r)}\exp(i\frac{\pi}{2}(1-r)), \hspace{0.4cm}\mbox{$\arg(ia_n)=\frac{\pi}{2}=\varphi$}.
\end{align*}} That is to say
{\small
\begin{align*}
\tau_n&=  -\frac{i}{2 }\frac{\hat{K}(ia_n)}{a^{2(1-\xi)}_{n}}[1+O(\tau_n)]=\left(-\frac{AD}{\beta B^{1-r}}\frac{1}{a^{r+2(1-\xi)}_n} +O\left(\frac{1}{a^{1+2(1-\xi)}_n}\right)\right)[1+O(\tau_n)].
\end{align*}}
Set $M:=-\frac{AD}{\beta B^{1-r}}$. Then 
{\small
\begin{align*}
\tau_n & = \frac{M}{a^{r+2(1-\xi)}_n}+O\left(\frac{1}{a^{1+2(1-\xi)}_n}\right)+\\
&+\left(\frac{M}{a^{r+2(1-\xi)}_n}+O\left(\frac{1}{a^{1+2(1-\xi)}_n}\right)\right) O\left(\frac{ M }{a^{r+2(1-\xi)}_n}+O\left(\frac{1}{a^{1+2(1-\xi)}_n}\right)\right)=\\
&=\frac{M}{a^{r+2(1-\xi)}_n}+O\left(\frac{1}{a^{1+2(1-\xi)}_n}\right)+O\left(\frac{1}{a^{r+1+2(1-\xi)}_n}\right)+O\left(\frac{1}{a^{2r+2(1-\xi)}_n}\right)+O\left(\frac{1}{a^{2}_n}\right).
\end{align*}}For the case $0<r<1$, we obtain the asymptotic formula 
\begin{align}
\tau_n & = \frac{M}{a^{r+2(1-\xi)}_n}+O\left(\frac{1}{a^{2r+2(1-\xi)}_n}\right), \hspace{0.3cm}\mbox{ $0 < r < \frac{1}{2}$},\label{asimpotota-tau-1}\\
\tau_n &=\frac{M}{a^{r+2(1-\xi)}_n}+O\left(\frac{1}{a^{1+2(1-\xi)}_n}\right),\hspace{0.3cm}\mbox{ $\frac{1}{2} \leq r<1$}.\label{asimpotota-tau-2}
\end{align} For the case $r=1$, using again the Lemma \ref{lema-sobre-convergencia-integral}, we have
{\small
\begin{align*}
-\frac{i}{2}\hat{K}(ia_n)&=-\frac{i}{2}\frac{A}{\beta}\frac{\ln\left|\frac{ia}{B}+1\right|}{ia_n}+O\left(\frac{1}{a_n}\right)=-\frac{1}{2}\frac{A}{\beta}\frac{\ln\sqrt{\frac{a^{2}_n}{B^2}+1}}{a_n}+O\left(\frac{1}{a_n}\right)\\
&=-\frac{1}{2}\frac{A}{\beta}\frac{\ln a_n}{a_n}+O\left(\frac{1}{a_n}\right). 
\end{align*}}  That is, we obtain another asymptotic formula, 
{\small
\begin{align}\label{asimpotota-tau-3}
\tau_n&= -\frac{i}{2}\frac{\hat{K}(ia_n)}{a^{2(1-\xi)}_n}[1+O(\tau_n)]\nonumber\\
&=-\frac{1}{2}\frac{A}{\beta}\frac{\ln a_n}{a^{1+2(1-\xi)}_n}+O\left(\frac{1}{a^{1+2(1-\xi)}_n}\right)+\nonumber \\
&+\left(-\frac{1}{2}\frac{A}{\beta}\frac{\ln a_n}{a^{1+2(1-\xi)}_n}+O\left(\frac{1}{a^{1+2(1-\xi)}_n}\right)\right) O\left( -\frac{1}{2}\frac{A}{\beta}\frac{\ln a_n }{a^{1+2(1-\xi)}_n}+O\left(\frac{1}{a^{1+2(1-\xi)}_n}\right)\right)\nonumber\\
&=-\frac{1}{2}\frac{A}{\beta}\frac{\ln a_n}{a^{1+2(1-\xi)}_n}+O\left(\frac{1}{a^{1+2(1-\xi)}_n}\right). 
\end{align}} We recall that 
\begin{align}
\lambda^{\pm}_{n}=\im \lambda^{\pm}_{n}+\re \lambda^{\pm}_{n}=\pm ia_n+\tau_n a_n 
\end{align} and so from asymptotic formulas (\ref{asimpotota-tau-1}) $-$ (\ref{asimpotota-tau-3}), we have 
\begin{align}
\re \lambda^{\pm}_n (\xi)& = \frac{M}{a^{r+1-2\xi}_n}+O\left(\frac{1}{a^{2(r-\xi)+1}_n}\right),\hspace{0.3cm}&\mbox{ $0<r<\frac{1}{2}$},\label{asympota-real1}\\
\re \lambda^{\pm}_n (\xi)&=\frac{M}{a^{r+1-2\xi}_n}+O\left(\frac{1}{a^{2(1-\xi)}_n}\right),\hspace{0.3cm}&\mbox{ $\frac{1}{2}\leq r<1$},\label{asympota-real2}\\
\re \lambda^{\pm}_n (\xi) &= -\frac{1}{2}\frac{A}{\beta}\frac{\ln a_n}{a^{2(1-\xi)}_n}+O\left(\frac{1}{a^{2(1-\xi)}_n}\right), \hspace{0.3cm}&\mbox{ $r=1$}.
\end{align} 
\end{proof}

We consider the dependence of the asymptotic behavior of the complex roots as from the properties of the kernel of the equation
 $\hat{K}$ and parameter $\xi$. If we consider the case  $ \xi \in (0, 1)$, for $a_n \to \infty$, then the complex roots tend to the imaginary axis. Indeed, 

\begin{enumerate} [\ i)]
\item For the case $r=1$, we get $\re \lambda^{\pm}_n (\xi) \to 0$ for all  $ \xi \in (0, 1)$. 
\item \label{the-rest-case}The rest of the case, i.e. when $r \in (0, 1)$,  we have $\re \lambda^{\pm}_n (\xi) \to 0$ if and only if  $r+1-2\xi>0$, that is, $\xi < \frac{r+1}{2}$ for $a_n \to \infty$. 
\end{enumerate}

\section{Remarks and Comments}\label{remarks-and-comments} 
In this paper we analyzed asymptotic behavior of the spectrum of the operator-valued function $L(\zeta)$, first for the case $\sum_{k=1}^{\infty} c_k< +\infty$, and the second when condition $\sum_{k=1}^{\infty} c_k< +\infty$ does not satisfies. For the first case, we obtained the asymptotic formulas  (\ref{asymptota-sum-finita-para-mu1}), (\ref{asymptota-sum-finita-para-mu2}) and (\ref{asymptota-sum-finita-para-mu3}) for different values of  $\xi$ where $\re \mu^{\pm}_n (\xi)\to 0$ as $a_n \to 0$. That is, in our case, the solutions of system (\ref{sytem1.1})-(\ref{sytem1.2}) does not decay exponentially for all $\xi \in (0, 1)$ unlike \cite{JMF} and \cite{MFBL}.

In the second case,  for the sequences $\{c_{k}\}^{\infty}_{k=1}$, $\{\gamma_{k}\}^{\infty}_{k=1}$, was  considered the  asymptotic representation 
{\small
\begin{align*}
   c_k=\frac{A}{k^{\alpha}} + O\left( \frac{1}{k^{\alpha +1}}\right), \hspace{0.2cm}  \gamma_k=B k^{\beta} + O( k^{\beta-1}), \hspace{0.2cm}  k \in \mathbb{N}
\end{align*}}together with the following additional conditions (\ref{gama-k+1-menos-gama-k-1}) and (\ref{gama-k+1-menos-gama-k-2}). From the asymptotic formulas (\ref{aimptotas-de-ceros-infinitos-1}), (\ref{aimptotas-de-ceros-infinitos-2}) and (\ref{aimptotas-de-ceros-infinitos-3}) we obtained the following analysis: considering the dependence of the asymptotic behavior of the complex zeroes as from the properties of the kernel of the equation $\hat{K}$ and parameter $\xi$ we obtained that  the complex zeroes tend to the imaginary axis for all $\xi \in (0, 1)$. Indeed, for $r=1$, we get $\re \lambda^{\pm}_n (\xi)\to 0$ for all $\xi \in (0, 1)$. Here, the solutions for system (\ref{sytem1.1})-(\ref{sytem1.2}) again does not decay exponentially for all $\xi \in (0, 1)$.  For the  rest  case, i.e. for $r \in (0, 1)$, the solutions for the system (\ref{sytem1.1})-(\ref{sytem1.2}) does not decay exponentially only if  $\xi \in (0, \frac{r+1}{2})$ because
 $\re \lambda^{\pm}_n (\xi) \to 0$ if and only if $\xi \in (0, \frac{r+1}{2})$.

We get a different case when we consider $r+1-2\xi<0$, i.e., if $\xi \in (\frac{r+1}{2}, 1)$ the solutions for system (\ref{sytem1.1})-(\ref{sytem1.2}) decay exponentially. Indeed for all $r, \xi$ such that $r \in (0, 1)$ and $\xi \in (\frac{r+1}{2}, 1)$  we have
\begin{align*}
\re \lambda^{\pm}_n (\xi)& = M a^{(r+1-2\xi)}_n+O\left(a^{(2 \xi -1)}_{n}\right),\hspace{0.2cm}&\mbox{$0<\xi<\frac{1}{2}$},\\
\re \lambda^{\pm}_n (\xi) &= M a^{(r+1-2\xi)}_n+O\left(\frac{1}{a^{(2\xi-1)}_n}\right), \hspace{0.2cm}&\mbox{$\frac{1}{2}<\xi<1$},\\
\re \lambda^{\pm}_n (\xi)&= M a^{r}_n+O\left(1\right),\hspace{0.2cm}&\mbox{$\xi=\frac{1}{2}$},  
\end{align*} where $M:=-\frac{AD}{\beta B^{1-r}}$. Here,  $\re \lambda^{\pm}_n (\xi) \to - \infty$ as $a_n \to \infty$.

If $\xi=\frac{r+1}{2}$, then the asymptotic formulas (\ref{asympota-real1}) and (\ref{asympota-real2}) depend only on $r$. Indeed, for all $r \in (0, 1)$, we have
\begin{align*}
\re \lambda^{\pm}_n & = -\frac{AD}{\beta B^{1-r}}+O\left(\frac{1}{a^{r}_{n}}\right)\\
\re \lambda^{\pm}_n &= -\frac{AD}{\beta B^{1-r}} +O\left(\frac{1}{a^{(1-r)}_n}\right). 
\end{align*} 
\newpage

\end{document}